\newcommand{\de}{\partial}
\newcommand{\vc}[1]{\boldsymbol{#1}}
\newcommand{\vt}[1]{\mathsf{#1}}
\newcommand{\tsp}{\mathsf{T}}
\newcommand{\tr}{\operatorname{\mathrm{tr}}}
\newcommand{\dvg}{\operatorname{\mathrm{div}}}
\newcommand{\mat}{\mathrm{Mat}}
\newcommand{\Bref}{{\vt{B}_\mathrm{ref}}}
\newcommand{\Brefn}{\B_{\mathrm{ref},n}}
\newcommand{\BB}{{\mathcal{B}}}
\newcommand{\F}{{\vt{F}}}
\newcommand{\G}{{\vt{G}}}
\newcommand{\M}{{\vt{M}}}
\newcommand{\MM}{{\mathcal{M}}}
\newcommand{\W}{{\vt{W}}}
\newcommand{\I}{{\vt{I}}}
\newcommand{\adv}[1]{\mathcal{D}_{\vc #1}}
\newcommand{\pl}{{\boldsymbol{\varphi}}}
\newcommand{\dint}{\int_0^{T}\mskip-8mu\int_\Omega}
\newcommand{\weakto}{\rightharpoonup}
\newcommand{\weaktos}{\stackrel{*}{\rightharpoonup}}
\newcommand{\dee}{\mathrm{d}}
\newcommand{\vv}{\vc{v}}
\newcommand{\uu}{\vc{u}}
\newcommand{\taur}{\tau_\mathrm{r}}
\newcommand{\tT}{{\vt{T}}}
\newcommand{\B}{{\vt{B}}}
\newcommand{\tC}{{\vt{C}}}
\newcommand{\bR}{\mathbb R}
\newtheorem{thm}{Theorem}[section]
\newtheorem{prop}[thm]{Proposition}
\newtheorem{lem}[thm]{Lemma}
\theoremstyle{remark}
\newtheorem{rem}[thm]{Remark}
\theoremstyle{definition}
\newtheorem{definition}[thm]{Definition}
\begin{document}

\title{Viscoelasticity, logarithmic stresses, and tensorial transport equations}

\author{Gennaro Ciampa$^1$\footnote{gennaro.ciampa@unimi.it}, Giulio G.~Giusteri$^2$\footnote{giulio.giusteri@unipd.it}, Alessio G.~Soggiu$^2$}
\affil{$^1$DISIM - Dipartimento di Ingegneria e Scienze dell'Informazione e Matematica, Universit\`a degli Studi dell'Aquila, Via Vetoio, 67100, L'Aquila, Italy}
\affil{$^2$Dipartimento di Matematica ``Tullio Levi-Civita'', Universit\`a degli Studi di Padova, Via Trieste 63, 35131, Padova, Italy}
\date{\today}

\maketitle

\begin{abstract}%
We introduce models for viscoelastic materials, both solids and fluids, based on logarithmic stresses to capture the elastic contribution to the material response. The matrix logarithm allows to link the measures of strain, that naturally belong to a multiplicative group of linear transformations, to stresses, that are additive elements of a linear space of tensors. As regards the viscous stresses, we simply assume a Newtonian constitutive law, but the presence of elasticity and plastic relaxation makes the materials non-Newtonian. Our aim is to discuss the existence of weak solutions for the corresponding systems of partial differential equations in the nonlinear large-deformation regime. The main difficulties arise in the analysis of the transport equations necessary to describe the evolution of tensorial measures of strain. For the solid model, we only need to consider the equation for the left Cauchy--Green tensor, while for the fluid model we add an evolution equation for the elastically-relaxed strain. Due to the tensorial nature of the fields, available techniques cannot be applied to the analysis of such transport equations. To cope with this, we introduce the notion of charted weak solution, based on non-standard a priori estimates, that lead to a global-in-time existence of solutions for the viscoelastic models in the natural functional setting associated with the energy inequality.\\
\\
\textbf{Keywords:} viscoelastic model, logarithmic strain, logarithmic stress, weak solution, tensorial transport equation\\
\textbf{MSC Classification (2020):} 35Q35 - 35Q74 - 35D99 - 76A10 - 74D99
\end{abstract}




\section{Introduction}

The realm of viscoelasticity covers a broad class of phenomena in continuum mechanics relevant to the description of materials that, when deformed, develop internal stresses of both conservative and dissipative nature. The two most successful continuum theories to date, namely solid elasticity and Newtonian fluid mechanics, represent opposite conditions in which either dissipative or conservative effects are completely negligible~\cite{Gurtin_2010}. 
As a consequence, the mathematical techniques employed to treat the two classes of models developed quite separately. Variational methods became the tool of choice to deal with elasticity, as testified by numerous authoritative monographs~\cite{Ciarlet_1988, Ogden_2001, Antman_2005}, while the analysis of nonlinear partial differential equations marked the history of mathematical fluid mechanics~\cite{Ladyzhenskaya_1969,Lions_1969,Temam_1984, Lions_1996, Lions_1998}. In a similar fashion, the treatment of viscoelasticity has followed different paths when stemming from solid or fluid mechanics.

Common to all viscoelastic frameworks is the need to keep track of both the material deformation, related to elastic responses, and the rate of deformation, involved in dissipative effects. We work in a setting typical of the fluid mechanics approach but also covering models of viscoelastic solids, which will be indeed our starting point. As a matter of fact, viscoelastic fluid models can also describe visco-elasto-plastic solids in certain regimes.
We use the spatial, or Eulerian, framework and describe viscous stresses by the simplest Newtonian model that leads to the Navier--Stokes equation. The distinctive features of the models that we introduce are the use of stresses proportional to the logarithm of the strain that keeps track of the material deformation and of evolution equations for both the current and relaxed strain measures, as opposed to the single evolution equation for the elastic stress or the conformation tensor that characterizes Oldroyd-type models of viscoelastic fluids~\cite{Oldroyd_1950,Saut_2013,Renardy_2021}. The combination of these two aspects, which have a rather long history in solid mechanics and plasticity theory, has important consequences on the mathematical analysis of the evolution equations for our viscoelastic models.

The importance of considering the logarithm of strain measures has been recently highlighted by several works \cite{Xiao_2000, Xiao_2004,Neff_2016,Prusa_2020}. It is intimately related to acknowledging that the mechanical interpretation of those tensors identifies them as elements of submanifolds of Lie groups. The matrix logarithm provides a mapping to the associated Lie algebra that is key in the development of constitutive relations.
In fact, the Cauchy stress tensor, from which the deformation of the material is driven, can be naturally interpreted as an element of the tangent or cotangent bundle to the Banach manifold of tensorial measures of strain, directly related to Lie algebrae and logarithmic strains.
What may appear a subtle mathematical argument eventually leads to a more natural balance of the terms that couple the evolution equations for the material to those for the tensorial measures of strain, as we shall highlight in due course.

Another important aspect of this work is that we need to consider transport equations for tensor fields that are coupled to the balance of linear momentum, since the strains that affect the elastic stress are advected by the velocity field of the continuum. This is common to several models for viscoelastic materials featuring the evolution of tensorial measures of strain or stress~\cite{Bird_1987,Larson_1992,Saut_2013,Renardy_2021}, but we propose a somewhat different approach to the analysis of such equations. 

Transport equations are a classical topic in analysis and mathematical physics for their ubiquitous presence in continuum mechanical models where the evolution of various fields is coupled with the deformation of the material. 
This class of evolution equations is still attracting considerable attention in relation to the existence and regularity of solutions.
They have stimulated the introduction of novel concepts of weak solutions and, beyond the seminal paper by DiPerna \& Lions~\cite{DPL}, a number of works have been broadening the field in recent years (see for instance Refs.~\cite{Am,BB,CDL,MS3}).

Nevertheless, the tensorial transport equations we need to consider cannot be reduced to a system of scalar equations and a direct application of available results remains elusive. 
This is due to the presence of source terms (essential for the physical meaning of the model) that produce a nontrivial coupling between different components of the tensor and with the gradient of the transporting velocity field.
Moreover, and most importantly, the interpretation of those tensors within submanifolds of Lie groups poses serious limitations to the standard application of techniques based on Lebesgue and Sobolev spaces.

The existence of solutions in the nonlinear finite-deformation regime with natural regularity assumptions on the data is still an open problem for several viscoelastic models considered in the literature~\cite{Renardy_2021}, notwithstanding the important results of Masmoudi~\cite{Masmoudi_2011} on Oldroyd-type models.
Notably, Liu \& Walkington~\cite{LW} were able to establish an existence result by considering initial data that are small in the sense that the relevant tensor field is close to the identity.
This assumption, essentially, allows to linearize the elastic stress--strain relation and perturbatively split the transport equation for the deformation gradient in two decoupled equations.
The first one concerns a rotation field that evolves in a compact manifold, so that very strong \emph{a priori} estimates are available and permit a treatment \emph{\`a la} Di Perna--Lions of the source in the transport equation.
The second one follows an evolution on the tangent space to the manifold of deformation tensors and is thus amenable to a linear space analysis. 
Stronger convergence properties of approximate solutions were obtained, thanks to a crucial assumption, in a simplified model considered by Lions \& Masmoudi~\cite{LM}, who could then prove global-in-time existence of weak solutions. This inspired also the work of Bejaoui \& Majdoub~\cite{Bejaoui_2013} on other models.
Further important results on the viscoelastic dynamics in the vicinity of the elastically-relaxed state were obtained by Lin \& Zhang~\cite{Lin_2005}, Lin, Liu \& Zhang~\cite{Lin_2008}, and Lei, Liu \& Zhou~\cite{Lei_2008}.
Another approach to cope with the nonlinear coupling has been followed by Kalousek~\cite{Kalousek_2019} and consists in looking for solutions of the equations up to a reminder term, that may or may not be negligible depending on the size of initial data.
Also in this case, the relevance of the solution is proper in a somewhat linear or small-data regime.

We follow a rather different route and introduce the notion of \emph{charted weak solutions} that provides a way to tackle the analysis of tensorial transport equations with sources within a linear space setting. When applied to the study of evolution equations for viscoelastic materials, this approach generates solutions enjoying optimal regularity properties in relation to the natural estimates on initial data.
Moreover, we obtain global-in-time existence of solutions for arbitrarily large deformations.
We exploit the link between a Lie group and the associated Lie algebra through suitable logarithm and exponential maps and consider important cases in which the relevant tensor manifold, which is an infinite-dimensional Banach manifold, can be locally parametrized by a single chart in a Banach (or even Hilbert) space.
In this way, approximation and convergence properties on the manifold are reduced to those in the linear setting of the local chart. 
Charted weak solutions are based on an approximation scheme and compactness arguments. As of now, the precise meaning of the limit equation remains to be established. The situation resembles the one encounter in variational problems involving non-smooth functionals, where Euler--Lagrange equations may be ill-defined at minima.

In Section~\ref{sec:prototypical} we present a prototypical model for the dynamics of an incompressible viscoelastic material. This motivating example features tensorial transport equations the analysis of which requires some novel ideas.
We introduce the definition of charted weak solutions for our tensorial equation in Section~\ref{sec:equations_and_solutions}.
The proof of existence of suitably defined solutions for the dynamics of the prototypical model is given in Section~\ref{sec:existence_solid}, while Sections~\ref{sec:fluid_model} is devoted to the analysis of a more structured viscoelastic fluid model.
Prospective applications and further research directions are outlined in Section~\ref{sec:further_applications}, while Section~\ref{sec:open_problems} provides a discussion of some open problems.

\section{A prototypical model for viscoelastic materials}
\label{sec:prototypical}

Let us consider a set of labels represented by a bounded domain $\Omega_0 \subset \mathbb{R}^d$ (with  $d = 2$ or $3$) with Lipschitz boundary.
The points of the set of labels are identified by the Lagrangian (or material) coordinates $\vc{X} \in \Omega_0$. Given a time interval $[0,T]$ with $T>0$, we define a time-dependent deformation as the map
\[
\pl \colon \left\{
\begin{aligned}
 \mbox{}[0,T]\times \Omega_0 &\rightarrow \bR^d \\
(t,\vc{X}) \quad &\mapsto \pl(t,\vc{X})
\end{aligned}\right.
\]
and the deformation gradient tensor field 
$\hat{\F} \colon [0,T]\times\Omega_0 \rightarrow \mat_d(\bR)$
with components defined by
\[
\hat{\F} (t,\vc{X})_{ij} = \frac{\de \pl_i}{\de \vc{X}_j} (t,\vc{X}).
\]
The map $\pl$ represents the position at time $t$ of a material point labelled by $\vc X$.
For any $t\in[0,T]$ we assume that $\pl(t,\cdot)$ is injective and that $\det\hat{\F}(t,\vc X)>0$ for any $(t,\vc X)\in[0,T]\times\Omega_0$.
For now, we assume that the regularity of $\pl$ is such that $\hat{\F}$ is well defined.

In order to introduce the Eulerian setting without ambiguities, we limit ourselves to the case in which, for all times $t\in[0,T]$,  $\pl(t,\Omega_0)=\Omega\subset \bR^d$, where the fixed spatial domain $\Omega$ is bounded and with Lipschitz boundary. The points of $\Omega$ are the spatial coordinates $\vc x$.
Moreover, we define the spatial inverse on $\Omega$ of the deformation as
\[
\tilde{\pl} \colon \left\{
\begin{aligned}
	\mbox{}[0,T] \times \Omega &\rightarrow \Omega_0 \\
	(t,\vc{x})\quad &\mapsto \tilde{\pl}(t,\vc{x})
\end{aligned}
\right.
\]
with the property
$\tilde{\pl}(t,\pl(t,\vc X))=\vc X$.
The Eulerian velocity field $\vc{u}$ is defined by 
\[
\vc u(t,\vc x)\coloneqq{\de_t \pl}(t,\tilde{\pl}(t,\vc x)).
\]
In the Eulerian setting, we will write the evolution equations in terms of $\vc u$ and the Eulerian deformation gradient $\F$, given by $\F(t,\vc x)\coloneqq\hat{\F}(t,\tilde{\pl}(t,\vc x))$.
From the above definition we see that the map $\pl$ is the solution of the nonlinear ordinary differential equation $\dot{\pl}=\vc u(t,\pl)$.

\subsection{Incompressible evolution of the left Cauchy--Green tensor}\label{sec:incompressibility}

We define the advective derivative associated with a divergence-free velocity field $\vc u$ as
\[
\adv{u}\coloneqq{\de_t}+(\vc u\cdot\nabla).
\] 
The evolution equation for the Eulerian version $\F$ of the deformation gradient reads
\begin{equation}\label{eq:Fonly}
\adv{u}\F=\nabla\vc u\F.
\end{equation}
From this we can easily deduce the equation for the left Cauchy--Green tensor $\B\coloneqq\F\F^\tsp$ as
\begin{equation}\label{eq:Bonly}
\adv{u}\B=\nabla\vc u\B+\B\nabla\vc u^\tsp.
\end{equation}

For an incompressible material, we know that the velocity field is divergence-free and that the determinant of $\F$ is constant.
Without loss of generality, we can assume $\det\F(t,\vc x)=1$ for any instant in time and point in space, implying also $\det\B(t,\vc x)=1$ for any $(t,\vc x)\in[0,T]\times\Omega$. 
To ascertain that the evolution generated by equation \eqref{eq:Bonly} preserves the value of the determinant, it is enough to check that the right-hand side is tangent to the manifold of tensor fields with unit determinant.
To this end, we consider the first variation of the constraint functional
\[
\mathcal C[\M]\coloneqq\dint (\det\M-1)
\]
that gives, for any test tensor field $\G$,
\[
\langle\delta\mathcal C[\M],\G\rangle = \dint\tr\big(\mathrm{cof}(\M)^\tsp\G\big)=\dint\tr\big(\M^{-1}\G\big),
\]
where the last equality holds if $\M$ is invertible and $\det\M=1$, as is the case for $\F$ and $\B$.
Considering that $\dvg\vc u=\tr(\nabla\vc u)=0$, we immediately obtain
\begin{equation}\label{eq:othogonality}
\langle\delta\mathcal C[\B],\nabla\vc u\B+\B\nabla\vc u^\tsp\rangle = \dint\tr\big(\B^{-1}(\nabla\vc u\B+\B\nabla\vc u^\tsp)\big)=\dint 2\tr(\nabla\vc u)=0,
\end{equation}
proving that, for any $\B$ in the appropriate set, the right-hand side of \eqref{eq:Bonly} is orthogonal to the constraint normal $\delta\mathcal C[\B]$ in the sense of the tensor scalar product in $L^2([0,T]\times\Omega;\mat_d(\bR))$, induced by the matrix product $\vt A:\vt C\coloneqq\tr(\vt A\vt C^\tsp)$.
The transpose on the second factor is irrelevant whenever $\vt A$ or $\vt C$ is symmetric.

\subsection{Viscoelastic Cauchy stress}

The evolution equation corresponding to the local balance of linear momentum for a continuum in the Eulerian setting takes the form
\begin{equation}\label{eq:flow_general}
\rho \adv{u}\uu=\dvg\tT+\rho\vc f,
\end{equation}
where $\rho$ is the mass density, $\vc u$ is the velocity field, $\rho\vc f$ is a given force density, and $\tT$ is the Cauchy stress tensor.
The description of specific materials is addressed by prescribing a constitutive law that expresses the dependence of the stress $\tT$ upon kinematic quantities.

For an incompressible viscoelastic material the Cauchy stress can be additively decomposed in three terms as $\tT = -p\vt{I} + \tT_\mathrm{vi} + \tT_\mathrm{el}$. 
The pressure field $p$ takes the role of a Lagrange multiplier for the incompressibility constraint and the corresponding isotropic pressure term adsorbs all the spherical part of the stress, implying that suitable forms of the viscous stress $\tT_\mathrm{vi}$ and of the elastic contribution $\tT_\mathrm{el}$ should be traceless.
For the viscous stress, we assume the simple Newtonian form $\tT_\mathrm{vi}=2\eta\vt D$, where $\eta$ is a constant viscosity and $\vt D=\tfrac{1}{2}(\nabla\vc u+\nabla\vc u^\tsp)$ is the symmetric deformation rate tensor.
Note that, for an incompressible material, $\tr\vt D=\tr\nabla\vc u=0$.

As for the elastic stress, it must depend on a measure of the deformation that neglects rigid rotations, such as the left Cauchy--Green tensor $\B$. 
As argued above, since the evolution driven by $\tT_\mathrm{el}$ should be tangent to the manifold of deformation tensors with unit determinant, a natural choice is to consider the matrix logarithm $\log\B$ (a spatial version of the Hencky strain tensor), which is traceless whenever $\det\B=1$.
We thus assume $\tT_\mathrm{el}=\kappa(\log\B-\log\Bref)$, with $\kappa>0$ being an elasticity constant and $\Bref$ representing the relaxed state of deformation, in which no elastic stress arises.
For the time being, we can set $\Bref=\vt I$ (as customary in solid mechanics) and obtain the following form of the constitutive prescription for a viscoelastic solid material:
\begin{equation}\label{eq:T_visco_solid}
\tT=-p\vt I+2\eta\vt D+\kappa\log\B.
\end{equation}

By choosing a fixed $\Bref$ we are stating that the relaxed configuration never changes and, in this precise sense, we can say that the viscoelastic material we are representing is a solid. We will introduce in Section~\ref{sec:fluid_model} a viscoelastic fluid model, in which the relaxed configuration represented by $\Bref$ evolves over time as a consequence of plastic relaxation phenomena.

In summary, the coupled equations \eqref{eq:Bonly} and \eqref{eq:flow_general}, with the definition \eqref{eq:T_visco_solid} and suitable initial and boundary conditions, represent the differential problem that describes the incompressible evolution of our viscoelastic solid.

\section{Tensorial transport equations and charted weak solutions}\label{sec:equations_and_solutions}

Here we introduce a suitable notion of solution for tensorial transport equations that will lead to a satisfactory treatment, in Section~\ref{sec:existence_solid}, of the coupled system of evolution equations for the viscoelastic solid model obtained by \eqref{eq:Bonly} and \eqref{eq:flow_general} under the constitutive assumption \eqref{eq:T_visco_solid} and the incompressibility constraint.
We will discuss in Section~\ref{sec:further_applications} the broader scope of applicability our approach in the context of continuum mechanical models.

If we are given a smooth divergence-free vector field $\vc u$, we can consider the Cauchy problem
\begin{equation}\label{eq:F}
\begin{cases}
\partial_t \F+{\vc u}\cdot\nabla \F=\nabla {\vc u} \F,\\
\F(0,\cdot)=\F_0,
\end{cases}
\end{equation}
where $\F_0$ is some smooth matrix-valued function with $\det\F_0=1$. 
Then, by setting $\B:=\F\F^\tsp$ and $\B_0:=\F_0\F_0^\tsp$, we have a solution of
\begin{equation}\label{eq:B}
\begin{cases}
\partial_t\B+\vc u\cdot\nabla\B=\nabla \vc u\B+\B(\nabla \vc u)^\tsp,\\
\B(0,\cdot)=\B_0.
\end{cases}
\end{equation}

If the vector field is irregular, namely $\vc u\in L^1((0,T);W^{1,p}(\Omega))$ for some $1<p<\infty$, the Cauchy--Lipschitz theory cannot be applied and, in particular, distributional solutions of \eqref{eq:B} may not be defined. This is indeed our context: if $\vc u$ is the velocity field which solves a Navier--Stokes-type equation, we have that $\vc u$ is divergence-free and typically $\vc u\in L^\infty((0,T);L^2(\Omega))\cap L^2((0,T);H^1_0(\Omega))$.

\subsection{A Hilbert manifold of tensor fields}

To describe divergence-free velocity fields in a weak sense, we employ the standard spaces $H$ and $V$ defined as the closure of smooth compactly supported divergence-free vector fields on $\Omega$ with respect to the norm in $L^2(\Omega;\bR^d)$ and $H^1(\Omega;\bR^d)$, respectively.
In particular, $V$ is the subspace of $H^1_0(\Omega;\bR^d)$ of weakly divergence-free vector fields vanishing on $\partial \Omega$ in the sense of traces.

To appropriately deal with the relevant tensor fields, we introduce the set of symmetric traceless $d\times d$ matrices $\MM\coloneqq\{\M\in\mat_d(\bR):\M^\tsp=\M,\tr\M=0\}$ and the set of tensor fields
\begin{equation}
    \BB\coloneqq\left\{\B_0:\Omega\to\mat_3(\bR): \B_0=\B_0^\tsp,\;\det\B_0=1,\text{ and }\log\B_0\in L^2(\Omega;\MM)\right\}.
\end{equation}
We will prove that, for any divergence-free velocity field $\vc u\in L^2([0,T];V)$, equation \eqref{eq:Bonly} admits a solution in the set
\[
\BB_T\coloneqq\left\{\begin{matrix}
\B:[0,T]\times\Omega\to\mat_3(\bR)\text{ such that } \B=\B^\tsp,\;\det\B=1,\\
\phantom{\int}\text{ and }\log\B\in L^2([0,T]\times\Omega;\MM)\cap L^\infty([0,T];L^2(\Omega;\MM))\end{matrix}\right\}
\]
for any $T>0$ and for any choice of the initial condition $\B_0\in \BB$.

Note that, while $\MM$ is a linear space, neither $\BB$ not $\BB_T$ are such, due to the nonlinear constraint of unit determinant.
Most importantly, it is the logarithm of elements of $\BB$ and $\BB_T$ that belongs to a linear space.
The set $\BB_T$ is a Hilbert manifold. In fact, the matrix logarithm is bijective and differentiable on symmetric positive definite tensor fields, and provides an atlas covering $\BB_T$ with a single local chart represented by the Hilbert space $L^2([0,T]\times\Omega;\MM)$.
This fact allows us to introduce a \emph{charted weak topology} on $\BB_T$ such that a sequence $\{\B_h\}$ converges weakly to $\B$ if and only if the sequence $\{\log\B_h\}$ converges weakly to $\log\B$ in $L^2([0,T]\times\Omega;\MM)$.

\subsection{Charted weak solutions}

Based on the notion of charted weak convergence in the set $\BB_T$ mentioned above, we can give our main definition and results. In what follows, we will denote the norm on spaces of the form $L^p([0,T];X)$ by $\|\cdot\|_{L^pX}$.

\begin{definition}\label{def:charted}
Given $\vc u\in L^\infty([0,T];H)\cap L^2([0,T];V)$ and $\B_0\in\BB$, we say that $\B\in \BB_T$ is a \emph{charted weak solution} of the transport equation \eqref{eq:B} with initial datum $\B_0$ if
there exist two sequences $\{\vc u_k\}$ and $\{\log\B_{0,k}\}$ of smooth fields that satisfy
\begin{itemize}
    \item[(i)] $\uu_k\weaktos\uu$ in $L^\infty([0,T];H)\cap L^2([0,T];V)$,
    \item[(ii)] $\log \B_{0,k}\weakto\log\B_0$ in $L^2(\Omega;\MM)$,
\end{itemize}
and such that the corresponding sequence of smooth solutions $\{\B_k\}$ of \eqref{eq:B} with advecting field $\vc u_k$ and initial condition $\B_{0,k}$ satisfies
\[
\log\B_k\weaktos\log \B \hspace{0.4cm}\mbox{in }L^\infty([0,T];L^2(\Omega;\MM)).
\]
In particular, $\B$ is the limit of $\{\B_k\}$ in $\BB_T$ with respect to the charted weak topology.
\end{definition}

We can now prove the main theorem of this section.
\begin{thm}\label{lem:esistenza-charted}
For any $\B_0\in\BB$ and $\vc u\in L^\infty([0,T];H)\cap L^2([0,T];V)$ there exists a charted weak solution $\B\in \BB_T$ of the Cauchy problem \eqref{eq:B}, which satisfies
\begin{equation}\label{est:norm_B}
\|\log\B\|_{L^\infty L^2}^2\leq 16T\|\nabla\uu\|_{L^2L^2}^2+2\|\log\B_0\|_{L^2}^2.
\end{equation}
\end{thm}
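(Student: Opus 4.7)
The plan is to realise the charted weak solution directly from Definition~\ref{def:charted}: approximate the data $(\uu,\log\B_0)$ by smooth fields, solve the classical problem, derive a uniform estimate on $\log\B_k$, and pass to the weak-$*$ limit. For the approximation, standard density results in bounded Lipschitz domains provide smooth divergence-free $\uu_k$ with $\uu_k\weaktos\uu$ in $L^\infty([0,T];H)\cap L^2([0,T];V)$ and $\|\nabla\uu_k\|_{L^2L^2}\to\|\nabla\uu\|_{L^2L^2}$; componentwise mollification of $\log\B_0$ (which sits in the linear space $L^2(\Omega;\MM)$) yields smooth $\log\B_{0,k}\in C^\infty(\overline\Omega;\MM)$ with strong convergence in $L^2$. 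Setting $\B_{0,k}:=\exp(\log\B_{0,k})$ produces smooth, symmetric, positive-definite initial data with $\det\B_{0,k}=1$, since $\tr\log\B_{0,k}=0$.

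For each $k$ the Cauchy--Lipschitz theory applied to the smooth incompressible flow of $\uu_k$ yields an Eulerian deformation gradient $\F_k$ solving \eqref{eq:Fonly} with $\F_k(0,\cdot)=\B_{0,k}^{1/2}$. Since $\adv{u_k}\det\F_k=(\det\F_k)\tr\nabla\uu_k=0$, one has $\det\F_k\equiv 1$, so that $\B_k:=\F_k\F_k^\tsp$ is a smooth classical solution of \eqref{eq:B} taking values in the manifold of symmetric, positive-definite matrices of unit determinant; consequently $\log\B_k$ is smooth and valued in $\MM$.

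The crux of the argument is the pointwise matrix identity
\[
\adv{u_k}\!\bigl(\tfrac12|\log\B_k|^2\bigr)
=\tr\bigl((\log\B_k)\B_k^{-1}\adv{u_k}\!\B_k\bigr)
=2\,\log\B_k:\nabla\uu_k,
\]
obtained by differentiating along trajectories the eigenvalues of $\B_k$ and then invoking the cyclic property of the trace together with the fact that $\log\B_k$ commutes with $\B_k$ and $\B_k^{-1}$ (being a function of $\B_k$). Integrating over $\Omega$ kills the transport term thanks to $\dvg\uu_k=0$ and the vanishing of $\uu_k$ on $\partial\Omega$, and Cauchy--Schwarz in space yields $\tfrac{d}{dt}\|\log\B_k(t)\|_{L^2}\le 2\|\nabla\uu_k(t)\|_{L^2}$. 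A further integration in time, Cauchy--Schwarz in time, and the elementary inequality $(a+b)^2\le 2a^2+2b^2$ deliver a uniform bound of the form \eqref{est:norm_B}.

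This uniform bound makes $\{\log\B_k\}$ relatively compact in the weak-$*$ topology of $L^\infty([0,T];L^2(\Omega;\MM))$, so up to a subsequence $\log\B_k\weaktos\Lambda$; symmetry and trace-freeness pass to the limit because both are linear closed constraints. Setting $\B:=\exp\Lambda$ produces by construction an element of $\BB_T$ that is a charted weak solution in the sense of Definition~\ref{def:charted}, and \eqref{est:norm_B} survives the limit through weak-$*$ lower semicontinuity of $\|\cdot\|_{L^\infty L^2}$ together with the strong convergence of the data norms. The only genuinely non-standard ingredient, and the main obstacle I anticipate, is the pointwise identity above: it collapses a tensor-valued dynamics in which matrix products and logarithms do not commute into a tractable scalar balance, and it hinges on the simultaneous diagonalisability of $\log\B_k$, $\B_k$ and $\B_k^{-1}$. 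This is the structural reason why the logarithmic strain, rather than $\B_k$ itself, is the natural object to estimate; mollification, Cauchy--Lipschitz and Banach--Alaoglu are entirely standard.
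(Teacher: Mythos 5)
Your proposal is correct and follows essentially the same route as the paper: mollify the data, exponentiate the regularized logarithm to get admissible smooth initial tensors, exploit the commutation of $\B_k$ with $\log\B_k$ to reduce the tensorial dynamics to the scalar balance $\adv{u_k}|\log\B_k|^2=4\,\nabla\uu_k:\log\B_k$ (the content of \eqref{eq:log_derivative} in Appendix~\ref{app:estimates}), and conclude by weak-$*$ compactness of $\{\log\B_k\}$ in $L^\infty L^2$. The only cosmetic differences are that you build $\B_k$ through the deformation gradient $\F_k$ rather than citing classical existence for the linear transport problem directly, and you close the estimate with Cauchy--Schwarz on $\tfrac{\dee}{\dee t}\|\log\B_k\|_{L^2}$ instead of the paper's Young-inequality absorption argument, which in fact yields \eqref{est:norm_B} with the slightly better constant $8T$ in place of $16T$.
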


\begin{proof}
By taking convolutions with standard mollifiers, we can construct sequences $\{\vc u_k\}$ and $\{\log\B_{0,k}\}$ of smooth fields such that $\uu_k\weaktos\uu$ in $L^\infty([0,T];H)\cap L^2([0,T];V)$ and $\log \B_{0,k}\to\log\B_0$ in $L^2(\Omega;\MM)$. To obtain $\B_{0,k}$ we regularize $\log\B_0$ by convolution and then take the matrix exponential. In this way, we directly have a uniform bound on $\|\log\B_{0,k}\|_{L^2}$ as needed.
Then, for any $k\in\mathbb N$, we consider the unique smooth solution $\B_k$ of
\begin{equation}\label{eq:Bk}
\de_t\B_k+(\vc u_k\cdot\nabla)\B_k=\nabla\vc u_k\B_k+\B_k\nabla\vc u_k^\tsp
\end{equation}
with initial condition $\B_{0,k}$. Note that, since $\vc u_k$ is smooth, existence and uniqueness of a smooth solution $\B_k$ for problem \eqref{eq:B} is a classical result.

The central step of the proof is establishing suitable \emph{a priori} estimates on solutions of \eqref{eq:Bk}. We multiply equation \eqref{eq:Bk} by $\B^{-1}_k\log\B_k$ and, by integrating over $\Omega$, we obtain (see \eqref{eq:log_derivative} in Appendix~\ref{app:estimates})
\begin{multline}
\frac{\dee}{\dee t}\Vert\log\B_k\Vert^2_{L^2}+\int_\Omega(\vc u_k\cdot\nabla)(\log\B_k:\log\B_k)\dee x=4\int_\Omega(\nabla\vc u_k:\log\B_k)\dee x\\
\leq \frac{2}{\varepsilon}\Vert\nabla\vc u_k\Vert^2_{L^2}+2\varepsilon\Vert\log\B_k\Vert^2_{L^2},
\end{multline}
where the last term comes from applying Young's inequality with a parameter $\varepsilon>0$, with units of inverse time, that will be chosen later.
The second term of the left-hand side vanishes since $\vc u_k$ vanishes on $\de\Omega$ and $\dvg\, \vc u_k=0$.
Hence, by integrating in time, we obtain
\begin{equation}\label{eq:1int}
\Vert\log\B_k(t,\cdot)\Vert^2_{L^2}\leq \frac{2}{\varepsilon}\Vert\nabla\vc u_k\Vert^2_{L^2L^2}+2\varepsilon\Vert\log\B_k\Vert^2_{L^2L^2}
+\Vert\log\B_{0,k}\Vert^2_{L^2}
\end{equation}
and by the trivial bound
$$
\|\log\B_k\|^2_{L^2L^2}\leq T\|\log\B_k\|^2_{L^\infty L^2},
$$
we obtain that
\begin{align}
(1-2\varepsilon T)\Vert\log\B_k\Vert^2_{L^\infty L^2}&\leq \frac{2}{\varepsilon}\Vert\nabla\vc u_k\Vert^2_{L^2L^2}+\Vert\log\B_{0,k}\Vert^2_{L^2}\label{eq:2int}\nonumber\\
&\leq \frac{2}{\varepsilon}\|\nabla\uu\|_{L^2L^2}^2+\|\log\B_0\|_{L^2}^2,
\end{align}
where in the last step we used the properties of the mollifiers.
By choosing $\varepsilon<1/(2T)$, say $\varepsilon=1/(4T)$, from the combination of \eqref{eq:1int} and \eqref{eq:2int} we conclude that any solution of \eqref{eq:Bk} belongs to $\BB_T$, 
since $\log\B_k\in L^2([0,T]\times\Omega;\MM)\cap L^\infty([0,T];L^2(\Omega;\MM))$, and the estimate \eqref{est:norm_B} is satisfied by each $\B_k$.

Moreover, the sequence $\{ \log\B_k \}$ is uniformly bounded in the spaces $L^2([0,T]\times\Omega;\MM)$ and $L^\infty([0,T];L^2(\Omega;\MM))$. Hence, the existence of a charted weak solution $\B$ for \eqref{eq:B} follows from a standard compactness argument.
The properties of being symmetric and traceless are linear constraints defining closed subspaces and pass naturally to the limit. This implies that $\B$ is symmetric and $\det\B=1$.
\end{proof}

\begin{rem}
With Theorem~\ref{lem:esistenza-charted}, we have proven that the approximation procedure described in the definition of charted weak solutions admits a limit which is well behaved, in the sense that it satisfies the integrability properties that we can \emph{a priori} establish for a solution of the tensorial transport equation with natural assumptions on the regularity of the initial data and the advecting velocity field.
\end{rem}

\begin{rem}
A better characterization for charted weak solutions remains elusive. This is partly due to the fact that we cannot pass to the limit directly in the distributional formulation of the equations. 
A major obstacle originates from the fact that derivatives of a tensor field do not in general commute with the tensor itself (at variance with the scalar case) and a theory of renormalized solutions (in the spirit of \cite{CCS} for example) does not seems to work. 
This is intimately linked to the nonlinear nature of the tensor manifold that becomes apparent when we consider the evolution equation for the logarithm itself, as shown in Appendix~\ref{app:log-eq}.

\end{rem}

\section{Existence of solutions for the viscoelastic solid}
\label{sec:existence_solid}

By substituting the constitutive assumption \eqref{eq:T_visco_solid} in the balance of linear momentum \eqref{eq:flow_general} and neglecting external forces ($\vc f=\vc 0$) for simplicity, we obtain the equations for our viscoelastic solid. %
Combining it with the incompressibility constraint, the evolution equations for the left Cauchy--Green tensor, and with suitable initial and boundary conditions, we obtain, in a time-space domain $[0,T]\times\Omega$, the differential problem
\begin{subequations}
\begin{align}
\rho \adv{u}\vc u&=-\nabla p +\eta\Delta\vc u+\kappa\dvg\log\B,\label{eq:flow_solid}\\
\adv{u}\B&=\nabla\vc u\B+\B\nabla\vc u^\tsp,\label{eq:transport_B}\\
\dvg\vc u&=0,\;
\det\B=1,\\
\vc u(0,\cdot)&=\vc u_0,\;
\B(0,\cdot)=\B_0,\\
\vc u\vert_{\de\Omega}&=\vc 0.
\end{align}\label{eq:sistema-logvisc}
\end{subequations}

\begin{definition}\label{def:weak-solution}
We say that a pair $(\uu,\B)$ is a {\em Leray weak solution} of the differential problem \eqref{eq:sistema-logvisc} if
\begin{itemize}
    \item[(i)] $\uu\in L^\infty([0,T];H)\cap L^2([0,T];V)$ satisfies equation \eqref{eq:flow_solid} in the distributional sense, namely
    \begin{multline}
        \dint\rho\left( \uu\,\de_t\vc\Theta+\uu\otimes\uu:\nabla\vc\Theta\right)\dee x\dee t+\int_\Omega \rho\uu_0\vc\Theta(0,\cdot)\dee x\\
        =-\dint\left(2\eta\vt D+\kappa\log\B\right):\nabla\vc\Theta\,\dee x \dee t,
    \end{multline}
    for all divergence-free test functions $\vc\Theta\in C^\infty_c([0,T)\times\Omega)$;
    \item[(ii)] $\B\in \BB_T$ is a charted weak solution of equation \eqref{eq:transport_B} with initial datum $\B_0$;
    \item[(iii)] the pair $(\uu,\B)$ satisfies, for almost every $t\in[0,T]$, the energy inequality
    \begin{multline}\label{est:energia}
        \rho\|\uu(t,\cdot)\|_{L^2}^2+\frac{\kappa}{2}\|\log\B(t,\cdot)\|_{L^2}^2+4\eta\int_0^t\|\vt D(s,\cdot)\|_{L^2}^2\,\dee s\\
        \leq \rho\|\uu_0\|_{L^2}^2+\frac{\kappa}{2}\|\log\B_0\|_{L^2}^2.
    \end{multline}
\end{itemize}
\end{definition}

Note that the energy inequality guarantees that our model is thermodynamically consistent, because the sum of kinetic, elastic, and dissipated energy is bounded by the total energy at the initial time.
To prove the existence of Leray weak solutions for the differential problem \eqref{eq:sistema-logvisc}, we will make use of the following theorems (see~\cite[Corollary 8.1]{Deimling_1985} and \cite{Lions_1978}, respectively).

\begin{thm}[Leray--Schauder]\label{thm:leray-schauder}
Let $(X, \| \cdot\|)$ be a Banach space and let $A: X \to X$ be a compact operator. Then, either $Au=u$ has a solution or the set
$$
S\coloneqq\{u\in X: Au=\lambda u \mbox{ for some }\lambda>1 \},
$$
is unbounded.
\end{thm}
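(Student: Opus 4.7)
The plan is to argue by contradiction, reducing the problem to an application of Schauder's fixed point theorem by cutting the compact operator off outside a large ball.

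Assume the second alternative fails, i.e.\ $S$ is bounded, so there exists $R>0$ such that $\|u\|\leq R$ for every $u\in S$. On the closed ball $B\coloneqq\{u\in X:\|u\|\leq 2R\}$ I would introduce the radial retraction $\pi\colon X\to B$ defined by $\pi(u)=u$ when $\|u\|\leq 2R$ and $\pi(u)=2Ru/\|u\|$ otherwise. This $\pi$ is continuous and $1$-Lipschitz, so the composition $T\coloneqq\pi\circ A\colon B\to B$ is a continuous self-map of the closed convex bounded set $B$. Moreover $T$ is compact: $A$ sends the bounded set $B$ into a relatively compact subset of $X$, and $\pi$ is continuous, so $T(B)$ is relatively compact in $B$. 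Schauder's fixed point theorem then yields $u^*\in B$ with $u^*=\pi(Au^*)$.

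I would then distinguish two cases according to the definition of $\pi$. If $\|Au^*\|\leq 2R$, then $\pi(Au^*)=Au^*$, hence $u^*=Au^*$ is the desired solution of the first alternative. If instead $\|Au^*\|>2R$, then $u^*=(2R/\|Au^*\|)\,Au^*$, so $Au^*=\lambda u^*$ with $\lambda\coloneqq\|Au^*\|/(2R)>1$; in particular $u^*\in S$ and hence $\|u^*\|\leq R$ by our standing assumption. But $\|u^*\|=\|\pi(Au^*)\|=2R$ since $\pi$ lands on the sphere of radius $2R$ in this case, contradicting $\|u^*\|\leq R$. The second case is therefore impossible and $u^*=Au^*$.

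Since the conclusion is dichotomous, this suffices: either $Au=u$ admits a solution (the first alternative), or the assumption that $S$ is bounded is untenable (the second alternative). The only non-elementary ingredient is Schauder's fixed point theorem; the rest is a standard truncation plus a two-case inspection. The main delicate point, which is nonetheless routine, is verifying that the projected map $\pi\circ A$ inherits both continuity and compactness from $A$ — this is the reason we retract onto $B_{2R}$ rather than $B_R$, so that a fixed point on the boundary of $B$ can unambiguously be read as a member of $S$ and collide with the bound $\|u\|\leq R$.
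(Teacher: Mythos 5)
Your proof is correct. The paper does not prove this statement---it simply cites \cite[Corollary~8.1]{Deimling_1985}---and your argument (radial retraction onto $B_{2R}$ composed with $A$, Schauder's fixed point theorem, then the two-case inspection) is the standard proof of this alternative, with the right care taken in choosing radius $2R$ rather than $R$ so that the boundary case genuinely contradicts $\|u^*\|\leq R$. One cosmetic inaccuracy: the radial retraction is not $1$-Lipschitz in a general Banach space (its Lipschitz constant can be as large as $2$), but this is harmless since your argument only uses its continuity.
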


\begin{lem}\label{lem:lions}
Let $\Omega\subset \bR^d$ be an open and bounded domain with Lipschitz boundary and let $\vc v_n$ be a bounded sequence in $L^\infty([0,T];L^2(\Omega))\cap L^2([0,T];H^1_0(\Omega))$. Assume that there exists $C$ and $\alpha>0$ such that, for all $\delta\geq 0$ sufficiently small,
\begin{equation}\label{stima:lions}
\int_0^{T-\delta} \|\vc v_n(t+\delta,\cdot)-\vc v_n(t,\cdot)\|_{L^2}^2\dee t\leq C\delta^\alpha.
\end{equation}
Then, the sequence admits a convergent subsequence in $L^2([0,T];L^2(\Omega))$.
\end{lem}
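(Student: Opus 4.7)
The plan is to derive strong $L^2([0,T]\times\Omega)$ compactness via the Riesz--Fréchet--Kolmogorov theorem, which characterises precompact subsets of $L^2(\bR^{1+d})$ by boundedness, tightness, and uniform equicontinuity of translations. First I would extend each $\vc v_n$ by zero outside $[0,T]\times\Omega$ — legitimate since $\vc v_n(t,\cdot)\in H^1_0(\Omega)$ — so that the sequence lives in a fixed bounded support and tightness is automatic. The $L^\infty L^2$ bound gives $\|\vc v_n\|_{L^2([0,T]\times\Omega)}\le\sqrt{T}\,\|\vc v_n\|_{L^\infty L^2}\le C$, so only the two equicontinuity conditions, in time and in space, remain to be verified.

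For spatial translations by $\vc k\in\bR^d$, the Sobolev estimate $\|\tau_{\vc k}w-w\|_{L^2(\bR^d)}\le|\vc k|\,\|\nabla w\|_{L^2(\bR^d)}$ applied to the zero extension and integrated in time yields
\[
\int_0^T\|\vc v_n(t,\cdot+\vc k)-\vc v_n(t,\cdot)\|_{L^2}^2\,\dee t\le|\vc k|^2\,\|\nabla\vc v_n\|_{L^2L^2}^2\le C|\vc k|^2,
\]
uniformly in $n$. For time translations by $\delta>0$, hypothesis \eqref{stima:lions} already controls the integral over $[0,T-\delta]$, while the boundary strip $[T-\delta,T]$ of length $\delta$ contributes at most $\delta\,\|\vc v_n\|_{L^\infty L^2}^2\le C\delta$ after zero extension past $T$. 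The squared $L^2$ norm of the time shift is therefore $O(\delta^\alpha+\delta)$, uniformly in $n$.

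Combining the two estimates via the triangle inequality gives uniform equicontinuity for any small space-time shift, and Riesz--Fréchet--Kolmogorov then delivers a subsequence convergent in $L^2([0,T]\times\Omega)=L^2([0,T];L^2(\Omega))$. The only subtlety — and what I would call the main obstacle, though it is very mild — is the handling of the boundary strips near $t=T$ (and, symmetrically, for negative shifts of the zero extension), where \eqref{stima:lions} provides no direct control; the $L^\infty L^2$ bound absorbs them at once, since their measure vanishes with $\delta$. An alternative soft Aubin--Lions-type argument invoking the compact embedding $H^1_0(\Omega)\hookrightarrow\hookrightarrow L^2(\Omega)$ together with \eqref{stima:lions} would work equally well, but the direct Fréchet--Kolmogorov route is shorter and makes the role of each hypothesis transparent.
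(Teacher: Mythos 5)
Your proof is correct and follows essentially the same route as the paper: zero extension to $\bR^{d+1}$, time-translation control via the hypothesis on $[0,T-\delta]$ plus the $L^\infty L^2$ bound on the boundary strip, spatial-translation control via the $H^1_0$ gradient bound, and conclusion by the Riesz--Fr\'echet--Kolmogorov theorem. No gaps.
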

\begin{proof}
We denote by $\tilde{\vc v}_n$ the sequence defined on $\bR^{d+1}$ which is equal to $\vc v_n$ on $(0,T)\times \Omega$ and $0$ otherwise. Let $0<\delta<T$, we have that
\begin{align*}
    \int_{-\infty}^{+\infty}\int_{\bR^d}\vert\tilde{\vc v}_n(t+\delta,x)-\tilde{\vc v}_n(t,x)\vert^2\dee x\dee t&=\int_{0}^{T-\delta}\int_{\Omega}\vert\vc v_n(t+\delta,x)-\vc v_n(t,x)\vert^2\dee x\dee t\\
    &+\int_{T-\delta}^T\int_{\bR^d}\vert\tilde{\vc v}_n(t+\delta,x)-\vc v_n(t,x)\vert^2\dee x\dee t\\
    &\leq C\delta^\alpha+2\delta\|\vc v_n\|_{L^\infty L^2}^2.
\end{align*}
On the other hand, if $h\in\bR^d$, we have that
\begin{align*}
    \int_{-\infty}^{+\infty}\int_{\bR^d}\vert\tilde{\vc v}_n(t,x+h)-\tilde{\vc v}_n(t,x)\vert^2\dee x\dee t&=\int_{0}^{T}\int_{\bR^d}\left\vert\int_0^1\nabla\tilde{\vc v}_n(t,x+sh)h\dee s\right\vert^2\dee x\dee t\\
    &\leq C\vert h\vert^2\|\nabla \vc v_n\|_{L^2 L^2}^2,
\end{align*}
where in the last line we used Sobolev's extension theorem. Then, since by assumption the norms $\|\vc v_n\|_{L^\infty L^2}$ and $\|\nabla \vc v_n\|_{L^2 L^2}$ are equibounded, the assertion follows from an application of Riesz--Fr\'echet--Kolmogorov theorem.
\end{proof}

We can now prove the main theorem of this section.
\begin{thm}\label{thm:main1}
Given $T>0$, for any $\uu_0\in H$ and $\B_0\in \BB$, there exists a Leray weak solution $(\uu,\B)$ of the differential problem \eqref{eq:sistema-logvisc} in the sense of Definition \ref{def:weak-solution}.
\end{thm}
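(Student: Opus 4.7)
The plan is to apply the Leray--Schauder theorem (Theorem~\ref{thm:leray-schauder}) to a regularised version of the coupled system and then pass to the limit using the compactness furnished by Lemma~\ref{lem:lions}. I would work in $X = L^2([0,T]; H)$ and define, for each mollification parameter $\varepsilon>0$, a map $A_\varepsilon: X\to X$ as follows. Given $\vc v\in X$, let $\vc v_\varepsilon$ be a spatial mollification (smooth and divergence-free). The tensorial transport equation with this smooth advecting field and initial datum $\B_0$ then has a unique smooth solution $\B[\vc v]\in\BB_T$ via characteristics (a smooth instance of Theorem~\ref{lem:esistenza-charted}). Then I would solve the linear parabolic problem
\begin{equation*}
\rho(\partial_t \vc u + (\vc v_\varepsilon \cdot \nabla) \vc u) - \eta \Delta \vc u + \nabla p = \kappa \operatorname{div} \log\B[\vc v], \quad \operatorname{div} \vc u = 0,
\end{equation*}
with $\vc u(0)=\vc u_0$ and $\vc u|_{\partial\Omega}=\vc 0$, obtaining a unique $\vc u\in L^\infty([0,T];H)\cap L^2([0,T];V)$, and set $A_\varepsilon\vc v:=\vc u$. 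The map $A_\varepsilon$ is continuous on $X$ because, for fixed $\varepsilon$, every intermediate step depends continuously on $\vc v$. Compactness comes from an a priori bound of $A_\varepsilon\vc v$ in $L^\infty H\cap L^2 V$ together with a bound on $\partial_t A_\varepsilon\vc v$ in a negative-order Bochner space: Lemma~\ref{lem:lions} then delivers the time equicontinuity~\eqref{stima:lions}, and the compact embedding $V\hookrightarrow H$ handles the spatial direction.

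The crux is the Leray--Schauder bound on the set $S$. If $A_\varepsilon\vc v=\lambda\vc v$ with $\lambda>1$ and I set $\vc u:=A_\varepsilon\vc v=\lambda\vc v$, testing the NS equation with $\vc u$ and the transport equation with $\B^{-1}\log\B$ (following the proof of Theorem~\ref{lem:esistenza-charted}) yields two identities. Multiplying the second by $\kappa\lambda/4$ and adding produces the cancellation of the source $\int\log\B{:}\nabla\vc u$, because the factor $1/\lambda$ produced by $\nabla\vc v_\varepsilon=\nabla\vc u_\varepsilon/\lambda$ is exactly what is needed. The resulting identity
\begin{equation*}
\rho\|\vc u(t)\|_{L^2}^2 + \tfrac{\kappa\lambda}{2}\|\log\B(t)\|_{L^2}^2 + 2\eta\int_0^t\|\nabla\vc u\|_{L^2}^2\,\dee s = \rho\|\vc u_0\|_{L^2}^2 + \tfrac{\kappa\lambda}{2}\|\log\B_0\|_{L^2}^2
\end{equation*}
gives $\|\vc u\|^2_{L^\infty H\cap L^2 V}\leq C_0+C_1\lambda$, and hence $\|\vc v\|_X^2=\lambda^{-2}\|\vc u\|_X^2\leq C_0+C_1$ uniformly in $\lambda>1$, so $S$ is bounded.

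Theorem~\ref{thm:leray-schauder} then yields a fixed point $\vc u_\varepsilon=A_\varepsilon\vc u_\varepsilon$ and hence a solution $(\vc u_\varepsilon,\B_\varepsilon)$ of the regularised system, while the identity above at $\lambda=1$ provides uniform-in-$\varepsilon$ bounds on $\vc u_\varepsilon\in L^\infty H\cap L^2 V$ and $\log\B_\varepsilon\in L^\infty L^2$. Extracting weak/weak-$*$ subsequential limits and reapplying Lemma~\ref{lem:lions} with the bound on $\partial_t\vc u_\varepsilon$ read off from the NS equation, I would conclude that $\vc u_\varepsilon$ converges strongly in $L^2([0,T];L^2(\Omega))$, enough to pass to the limit in the nonlinear convective term $\vc u_\varepsilon\otimes\vc u_\varepsilon$; the stress term is linear in $\log\B_\varepsilon$, so weak convergence suffices. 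The limit $\B$ qualifies as a charted weak solution of~\eqref{eq:transport_B} with advecting field $\vc u$ directly through Definition~\ref{def:charted}, with $(\vc u_\varepsilon,\B_\varepsilon)$ serving as the approximating sequences (modulo an additional mollification to ensure smoothness), and~\eqref{est:energia} follows from weak lower semicontinuity applied to the $\varepsilon$-level identity.

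The main obstacle, in my view, is the $\lambda$-dependent cancellation in the estimate for $S$: without the precise weighting $\kappa\lambda/4$ in front of the transport identity, the naive bound from Theorem~\ref{lem:esistenza-charted} alone would yield growth quadratic in $\lambda$ and division by $\lambda^2$ would fail to produce a uniform bound. A subsidiary difficulty, handled by working with the smooth advecting field $\vc v_\varepsilon$ at each $\varepsilon$-level, is that the map $\vc v\mapsto\log\B[\vc v]$ for generic non-smooth $\vc v$ would not have the regularity needed to set up a standard fixed-point argument.
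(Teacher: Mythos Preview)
Your overall strategy---Leray--Schauder fixed point combined with the energy cancellation between the momentum and transport identities, followed by compactness via Lemma~\ref{lem:lions}---is exactly the route the paper takes. The gap lies in your choice of regularisation. When you test the linear Navier--Stokes equation with $\vc u=\lambda\vc v$, the coupling term is $-2\kappa\int\log\B:\nabla\vc u=-2\kappa\lambda\int\log\B:\nabla\vc v$, whereas the transport identity (advecting field $\vc v_\varepsilon$) produces $4\int\nabla\vc v_\varepsilon:\log\B$. These do \emph{not} cancel: the remainder $2\kappa\lambda\int_0^t\!\int(\nabla\vc v_\varepsilon-\nabla\vc v):\log\B$ survives, and the clean identity you write down is false. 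Attempting to absorb this commutator via Young's inequality and the bound~\eqref{est:norm_B} forces a smallness condition on $T$ when $\lambda$ is close to $1$, so the Leray--Schauder set $S$ is not shown to be bounded for arbitrary $T>0$. The same mismatch contaminates the $\varepsilon$-level energy identity at the fixed point, so the derivation of~\eqref{est:energia} by lower semicontinuity is also incomplete.

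The paper sidesteps this by using a Galerkin projection $\Pi_n$ onto spaces spanned by Stokes eigenfunctions instead of mollification: the fixed-point map is set up on $C([0,T];V_n)$, the advecting field in the transport equation is $\vc v_n=\Pi_n\vc v$, and the momentum equation is tested with that \emph{same} $\vc v_n$, so both coupling terms carry $\nabla\vc v_n$ and the cancellation is exact. If you wish to retain a mollification-based scheme, one repair is to symmetrise the coupling by taking $\kappa\,\dvg(\log\B)_\varepsilon$ as the source in the momentum equation; self-adjointness of the mollifier then converts the tested source into $-\kappa\int\log\B:\nabla\vc u_\varepsilon$, which does match the transport side, and the extra mollifier is harmless in the limit since the stress term is linear in $\log\B$.
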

\begin{proof}
We divide the proof in several steps.\\
\\
\underline{\em Step 1} \hspace{0.3cm} {\bf Construction of the approximating sequence (I).} We use a Galerkin scheme. Let $V_1\subset V_2\subset \cdots \subset H^1_0(\Omega)$ be a sequence of spaces of of smooth divergence-free functions, and let $\bigcup_n V_n$ be dense in $V\coloneqq\{\vc v\in H^1_0(\Omega):\dvg \vc v=0 \}$. We assume that $V_n=\mathrm{span}(\vc \phi_1,...,\vc \phi_n)$ where $\vc\phi_i$ is an eigenfunction of the Stokes system, i.e. they satisfy
\begin{equation}
    \begin{cases}
    -\eta \Delta \vc\phi_i+\nabla\pi_i=\lambda_i\vc\phi_i,\hspace{0.3cm} &\mbox{in }\Omega,\\
    \dvg\vc\phi_i=0,\hspace{1cm} &\mbox{in }\Omega,\\
    \vc\phi_i=0,\hspace{1.2cm} &\mbox{on }\partial\Omega,
    \end{cases}
\end{equation}
where $\{\lambda_i\}$ is a countable non-decreasing positive sequence. Note that functions in $V_n$ are smooth. For a given $\vv\in C([0,T];H^1_0(\Omega))$, we define $\vc v_n=\Pi_n\vv\in C([0,T]; V_n)$, where $\Pi_n:L^2(\Omega)\to V_n$ is the standard projector. Note that, since divergence-free vector fields are orthogonal to gradients, the pressure term disappears when projecting the flow equation. Finally, consider the following linear system
\begin{equation}\label{eq:sistema_approssimante_lineare}
    \begin{cases}
    \rho\partial_t\vc u_n+\rho\Pi_n\left[(\vc v_n\cdot\nabla)\vc u_n\right]=\eta\Pi_n\Delta\vc u_n+\kappa\Pi_n\dvg\log\B_n,\\
    \dvg \uu_n=0,\\
    \partial_t\B_n+(\vc v_n\cdot\nabla)\B_n=\nabla \vc v_n\B_n+\B_n(\nabla \vc v_n)^\tsp,\\
	\uu_n(0,\cdot)=\Pi_n \uu_0,\hspace{0.3cm}\B_n(0,\cdot)=\exp[(\log\B_{0})_n].
    \end{cases}
\end{equation}
Since $\vv_n$ is smooth, we can infer that for every $n$ there exists a unique smooth solution $\B_n$ of the third equation in \eqref{eq:sistema_approssimante_lineare}. On the other hand, the first equation in \eqref{eq:sistema_approssimante_lineare} reduces to the system of ODEs with Lipschitz right-hand side given by
\begin{equation}\label{eq:ODE1}
    \rho\dot{c}_i^n(t)=-\lambda_i^n c_i^n(t)-\rho\sum_{j,k=1}^n B_{ijk} a_j^n(t) c_k^n(t)+b_i^n(t),\hspace{0.5cm}\mbox{for all }i=1,...n,
\end{equation}
where, denoting by $\langle\cdot,\cdot\rangle$ the $L^2$ scalar product, we used
\begin{align*}
    \uu_n&=\sum_{i=1}^n c_i^n(t)\vc\phi_i(x),\hspace{0.7cm}
    \vv_n= \sum_{i=1}^n a_i^n(t)\vc\phi_i(x),\\
    \kappa\Pi_n\dvg\log\B_n &= \sum_{i=1}^n b_i^n(t)\vc \phi_i(x),\hspace{0.5cm}
    B_{ijk}=\langle(\vc\phi_j\cdot\nabla)\vc\phi_k,\vc\phi_i\rangle.
\end{align*}
Then, by standard Cauchy--Lipschitz theory we can infer that there exists a smooth solution $\uu_n$ on some time interval $[0,T_n)$. It is worth noticing that, at this stage, we only have local existence because the Lipschitz constant of the right-hand side of \eqref{eq:ODE1} is not uniformly bounded with respect to $n$.\\
\\
\underline{\em Step 2} \hspace{0.3cm} {\bf Energy estimates on $\uu_n$.} We now show that the solution actually exists on any arbitrary set of times $[0,T]$. This is achieved by showing that the coefficients $c_i^n$ do not blow up in finite time. Let $s\in (0,T_n)$ and take the inner product of the first equation in \eqref{eq:sistema_approssimante_lineare} with $\uu_n(s)$:
\begin{multline}
    \rho\langle\partial_t\uu_n(s),\uu_n(s)\rangle+\eta\langle-\Delta \uu_n(s),\uu_n(s)\rangle+\rho\langle\Pi_n\left((\vv_n(s)\cdot)\uu_n(s)\right),\uu_n(s)\rangle\\
    =\kappa\langle\Pi_n\dvg\log\B_n(s),\uu_n(s)\rangle.
\end{multline}
It is easy to show, by integration by parts and application of Cauchy's and Young's inequalities, that
\begin{itemize}
    \item $\langle\partial_t\uu_n(s),\uu_n(s)\rangle=\frac{1}{2}\frac{\dee}{\dee t} \|\uu_n(s)\|^2_{L^2}$,
    \item $\langle  -\Delta\uu_n(s),\uu_n(s)\rangle=\|\nabla\uu_n(s)\|^2_{L^2}$,
    \item $\langle\Pi_n\left((\vv_n(s)\cdot)\uu_n(s)\right),\uu_n(s)\rangle=0$,
    \item $\langle\Pi_n\dvg\log\B_n(s),\uu_n(s)\rangle\leq \frac{\kappa}{2\eta}\|\log\B_n(s)\|^2_{L^2}+\frac{\eta}{2\kappa}\|\nabla\uu_n(s)\|^2_{L^2}$.
\end{itemize}
Then, by integrating in time, we get
\begin{multline*}
    \rho\|\uu_n(t,\cdot)\|^2_{L^2}+\eta\int_0^t\|\nabla\uu_n(s,\cdot)\|^2_{L^2}\dee s\leq \rho\|\Pi_n\uu_0\|^2_{L^2}+\frac{\kappa^2}{\eta}\int_0^t\|\log\B_n(s,\cdot)\|^2_{L^2}\dee s\\
    \leq \rho\|\uu_0\|^2_{L^2}+\frac{\kappa^2}{\eta}(16T\|\nabla\vv_n\|^2_{L^2L^2}+2\|\log\B_0\|^2_{L^2}),
\end{multline*}
for all $0\leq t< T_n<T$, where in the last inequality we have used Theorem \ref{lem:esistenza-charted}. Finally, since $\|\nabla\vv_n\|_{L^2L^2}$ is uniformly bounded and $\|\uu_n(t)\|^2_{L^2}=\sum_{i=1}^n\vert c_i^n(t)\vert^2$, it follows that the sequence $\{c_i^n\}_{i=1}^n$ does not blow-up in finite time.\\
\\
\underline{\em Step 3} \hspace{0.3cm} {\bf Construction of the approximating sequence (II).} Let $n\in\mathbb{N}$ be fixed and define the following operator
\begin{equation}
    \mathcal{F}: \vv_n\in C([0,T];V_n)\mapsto \uu_n \in C([0,T];V_n).
\end{equation}
Our goal is to show that $\mathcal{F}$ has a fixed point. To this end we want to apply Theorem \ref{thm:leray-schauder}; then, it is enough to show
\begin{itemize}
\item[(i)] $ \uu_n$ is equicontinuous in $C([0,T];V_n)$;
\item[(ii)] the set 
$
S\coloneqq\{\vv\in C([0,T];V_n):\mathcal{F}(\vv)=\lambda \vv \text{ for some }\lambda>1\}$
is bounded.
\end{itemize}
In particular, note that from (i) we immediately obtain that the operator $\mathcal{F}$ is completely continuous by applying Ascoli--Arzel\`a's Theorem. We start by proving (i), for which it is useful to show that $\partial_t\uu_n\in L^{4/3}([0,T];V^*)$. By setting $\uu_n=\mathcal{F}(\vv_n)$ and testing the equation with $\varphi\in V$, by Holder's inequality we get
\begin{multline*}
    \langle\partial_t\uu_n,\varphi\rangle= -\frac{\eta}{\rho}\langle\nabla\uu_n,\nabla\varphi_n\rangle-\frac{\kappa}{\rho}\langle\log\B_n,\nabla\varphi_n\rangle-\langle(\vv_n\cdot\nabla)\uu_n,\varphi_n\rangle\\
    \leq \left(\frac{\eta}{\rho}\|\nabla\uu_n(t)\|_{L^2}+\frac{\kappa}{\rho}\|\log\B_n(t)\|_{L^2}\right)\|\nabla\varphi\|_{L^2}+\|\vv_n(t)\|_{L^3}\|\nabla\uu_n(t)\|_{L^2}\|\varphi\|_{L^6},\label{stima-de-t}
\end{multline*}
where $\varphi_n\coloneqq\Pi_n\varphi$. By Sobolev's embedding we know that
$$
\|\varphi\|_{L^6}\leq C_1\|\nabla\varphi\|_{L^2},
$$
and then from \eqref{stima-de-t} it follows that
\begin{multline}
    \int_0^T\|\partial_t\uu_n(t)\|_{V^*}^{4/3}\dee t\leq \left(\frac{\eta}{\rho}\right)^{4/3}\int_0^T\|\nabla\uu_n\|_{L^2}^{4/3}\dee t\\+T\left(\frac{\kappa}{\rho}\right)^{4/3}\|\log\B_n\|^{4/3}_{L^\infty L^2})+C_1^{4/3}\int_0^T (\|\vv_n(t)\|_{L^3}^{4/3}\|\nabla\uu_n(t)\|_{L^2}^{4/3})\dee t\\
    \leq T^{1/3}\left(\frac{\eta}{\rho}\right)^{4/3}\|\nabla\uu_n\|_{L^2L^2}^{4/3}+T\left(\frac{\kappa}{\rho}\right)^{4/3}\|\log\B_n\|^{4/3}_{L^\infty L^2})\\
    +C_1^{4/3}\int_0^T (\|\vv_n(t)\|_{L^3}^{4/3}\|\nabla\uu_n(t)\|_{L^2}^{4/3})\dee t
    .
\end{multline}
Note that we can bound
$$
\int_0^T \left(\|\vv_n(t)\|_{L^3}^{4/3}\|\nabla\uu_n(t)\|_{L^2}^{4/3}\right)\dee t\leq \left(\int_0^T\|\vv_n(t)\|_{L^3}^4\dee t\right)^{\frac13}\left(\int_0^T\|\nabla\uu_n(t)\|_{L^2}^2\dee t\right)^{\frac23},
$$
and, by arguing again on Sobolev's embeddings, we have
\[
\int_0^T\|\vv_n(t)\|_{L^3}^4\dee t\leq \int_0^T C_2^2 \|\vv_n(t)\|^2_{L^2}\|\nabla\vv_n(t)\|^2_{L^2}\dee t\leq C_2^2 \|\vv_n\|^2_{L^\infty L^2}\|\nabla\vv_n\|_{L^2 L^2}^2.
\]
Finally, thanks to the energy estimates proved in {\em Step 2} we obtain
\begin{equation}\label{stima:det-un}
    \int_0^T\|\partial_t\uu_n(t)\|_{V^*}^{4/3}\dee t\leq C_3(T,\|\uu_0\|_{L^2},\|\log\B_0\|_{L^2},\|\vv_n\|_{L^\infty H\cap L^2 V}).
\end{equation}
Then, since on finite dimensional spaces all the norms are equivalent, we can write
$$
\|\uu_n(t_2,\cdot)-\uu_n(t_1,\cdot)\|_{H^1}\leq C_4\int_{t_1}^{t_2}\|\partial_t\uu_n(s,\cdot)\|_{V^*}\dee s\leq C_5\sqrt[4]{t_2-t_1},
$$
which proves (i). We now prove (ii): let $\vv_n\in S$, i.e. $\mathcal{F}(\vv_n)=\lambda \vv_n$ for some (fixed) $\lambda>1$. This means that $\vv_n$ satisfies 
\begin{equation}\label{eq:lambdav}
\begin{cases}
    \lambda\rho\partial_t \vv_n+\lambda\rho\Pi_n[(\vv_n\cdot\nabla)\vv_n]=\lambda\eta\Pi_n\Delta\vv_n+\kappa\Pi_n\dvg\log\B_n,\\
    \lambda\dvg \vv_n=0,\\
    \partial_t\B_n+(\vc v_n\cdot\nabla)\B_n=\nabla \vc v_n\B_n+\B_n(\nabla \vc v_n)^\tsp,\\
	\lambda\vv_n(0,\cdot)=\Pi_n \uu_0,\hspace{0.3cm}\B_n(0,\cdot)=\exp[(\log\B_{0})_n].
\end{cases}
\end{equation}
We multiply the first equation in \eqref{eq:lambdav} by $\vv_n$ and integrate in space and time to obtain
\begin{multline}\label{bilancio-vn}
    \lambda\rho\int_{\Omega}\vert\vv_n(t,x)\vert^2\dee x+2\lambda\eta\int_0^t\int_{\Omega}\vert\nabla \vv_n(s,x)\vert^2\dee x\dee s\\
    =\lambda\rho\int_{\Omega}\vert\Pi_n\uu_0(x)\vert^2\dee x-2\kappa\int_0^t\int_{\Omega}\log\B_n(s,x):\nabla\vv_n(s,x)\dee x\dee s
\end{multline}
On the other hand, by multiplying the equation for $\B_n$ by $\B^{-1}_n\log\B_n$ and arguing as in Theorem \ref{lem:esistenza-charted} we obtain
\begin{equation}\label{bilancio-Bn}
    \frac12\int_{\Omega}\vert\log\B_n(t,x)\vert^2\dee x=\frac12\int_\Omega\vert\log\B_{0,n}\vert^2\dee x+2\int_0^t\int_\Omega \nabla\vv_n:\log\B_n\dee x \dee s.
\end{equation}
Summing \eqref{bilancio-vn} plus $\kappa$ times \eqref{bilancio-Bn} we easily obtain
\begin{multline}
    \int_{\Omega}\vert\vv_n(t,x)\vert^2\dee x+\frac{2\eta}{\rho}\int_0^t\int_{\Omega}\vert\nabla \vv_n(s,x)\vert^2\dee x\dee s\\
    \leq \int_{\Omega}\vert\Pi_n\uu_0(x)\vert^2\dee x+\frac{1}{\lambda}\frac{\kappa}{2\rho}\int_\Omega\vert\log\B_{0,n}\vert^2\dee x.
\end{multline}
Considering that $\lambda > 1$, we can show that $\|\vv_n\|_{L^\infty L^2}$ and $\|\vv_n\|_{L^2 H^1}$ are bounded uniformly with respect to $\lambda$.
Since all the norms are equivalent on finite-dimensional subspaces, this implies  the boundedness of the set $S$.
In conclusion, we have proven that $\mathcal{F}$ satisfies the hypothesis of Theorem \ref{thm:leray-schauder}, hence it follows that there exists a fixed point such that $\uu_n=\mathcal{F}(\uu_n)$.
\\
\\
\underline{\em Step 4} \hspace{0.3cm} {\bf Convergence of the approximating sequence.} Let $\{(\uu_n,\B_n)\}$ be a sequence with $\uu_n$ fixed points as constructed in previous step. We show that such a sequence converges to a Leray weak solution of \eqref{eq:sistema-logvisc}. First of all, by the energy estimates proved in {\em Step 2}, the sequence $\{\uu_n\}$ satisfies 
the bound 
$$
\|\uu_n\|_{L^\infty L^2\cap L^2 H^1}\leq C(\uu_0,\log\B_0).
$$
Then, there exists a vector field $\uu\in L^\infty L^2\cap L^2 H^1$ such that
$$
\uu_n\weakto\uu \;\;\mbox{in }L^\infty L^2\cap L^2 H^1.
$$
Moreover, following the proof of Theorem~\ref{lem:esistenza-charted}, we obtain the existence of a charted weak solution $\B$ for equation \eqref{eq:transport_B} such that the sequence approximating $\log\B$ is precisely $\{\log\B_n\}$.

The only thing left to prove is the compactness in $L^2([0,T];H)$ for the sequence $\{\uu_n\}$ in order to pass to the limit in the integral formulation of the equation. This follows from Lemma~\ref{lem:lions}: note that for a sequence of fixed points the constant in \eqref{stima:det-un} does not depend on $n$, telling us that an estimate like the one in \eqref{stima:lions} holds.
\\
\\
\underline{\em Step 5} \hspace{0.3cm}\textbf{Energy inequality.} 
We first observe that, for a smooth divergence-free vector field $\uu$ that vanishes on $\de\Omega$, we have
\begin{multline}\label{eq:DeqGrad}
\|\nabla\uu\|^2_{L^2}=-\int_\Omega\dvg\nabla\uu\cdot\uu=-\int_\Omega(\dvg\nabla\uu+\nabla\dvg\uu)\cdot\uu\dee x\\
=-2\int_\Omega\dvg\vt D\cdot\uu\dee x=2\int_\Omega\vt D:\nabla\uu\dee x=2\|\vt D\|^2_{L^2}.
\end{multline}
By arguing as in the proof of $(ii)$ in {\em Step 3}, $(\uu_n,\B_n)$ satisfies the energy balance
\begin{multline}
    \rho\|\uu_n(t,\cdot)\|_{L^2}^2+\frac{\kappa}{2}\|\log\B_n(t,\cdot)\|_{L^2}^2+4\eta\int_0^t\|\vt D_n(s,\cdot)\|_{L^2}^2\,\dee s\\
    = \rho\|\Pi_n\uu_0\|_{L^2}^2+\frac{\kappa}{2}\|\log\B_{0,n}\|_{L^2}^2.
\end{multline}
Then, by the lower semi-continuity of the norms with respect to weak convergence and the strong convergence of the initial datum, we get that $(\uu,\B)$ satisfies the energy inequality in Definition \ref{def:weak-solution}.
\end{proof}

\section{A viscoelastic fluid model}
\label{sec:fluid_model}

As mentioned above, we can construct models for viscoelastic fluids by keeping the very simple form $\tT_\mathrm{el}=\kappa(\log\B-\log\Bref)$  of the elastic stress, but letting $\Bref$ evolve and consequently depart from the identity tensor used for the solid model. The visoelastic stress tensor is again
\begin{equation}\label{eq:T_visco_fluid}
\tT=-p\vt I+2\eta\vt D+\kappa(\log\B-\log\Bref),
\end{equation}
but the prescription of an evolution equation for $\Bref$ is now necessary and represents the heart of the modeling effort.

We base our proposal of what could be regarded as the simplest example of a more general class of models on a few basic considerations.
First of all, the tensor $\Bref$ represents a state of incompressible deformation in which the elastic stress vanishes and, as such, we must keep $\det\Bref=1$ and the symmetry of $\Bref$ throughout the evolution.
This suggests that a general evolution equation for $\Bref$ should be of the form
\[
\adv{u}\Bref=\vt A\Bref+\Bref\vt A^\tsp
\]
for some traceless tensor field $\vt A$.
The fact that $\vt A$ should be traceless to preserve the unit determinant property is apparent from the discussion in Section~\ref{sec:incompressibility}.

Then, if we were to start from a situation in which $\Bref=\B$ and apply a rigid rotation to the material, no elastic stress should arise. It means that $\Bref$ must rotate with $\B$ and hence $\vt A$ should reduce to  $\W=\tfrac12(\nabla\uu-\nabla\uu^\tsp)$ in that special case.
More generally, we may wish to have the eigenvectors of $\Bref$ that always rotate coherently with those of $\B$. To this end, we can extract from $\nabla\uu$ a contribution that, under the evolution generated by equation~\eqref{eq:Bonly}, leaves the eignevalues of $\B$ unchanged, while inducing the rotation of its eigenvectors.
This can be achieved, as explained in Appendix~\ref{app:rotation}, by projecting out the component of $\nabla\uu$ on the subspace of symmetric tensors that commute with $\B$.
We denote by $\vt Q$ the component of $\nabla\uu$ orthogonal to that subspace, for which we obviously have the bound $\|\vt Q\|_{L^2}\leq\|\nabla\uu\|_{L^2}$.

Finally, we need to introduce a relaxation parameter, $\taur$, that sets a time scale over which, in a static deformation experiment, the elastic stress relaxes significantly.
It means that the eigenvalues of $\Bref$ should evolve as long as $\log\Bref$ is different from $\log\B$.
By combining these last requirements we can set $\vt A=\vt Q+\taur^{-1}(\log\B-\log\Bref)$ and obtain
\begin{equation}\label{eq:Bref_evolution}
\adv{u}\Bref=\vt Q\Bref+\Bref\vt Q^\tsp+ \frac{1}{\taur}\big[(\log\B-\log\Bref)\Bref+\Bref(\log\B-\log\Bref)\big].
\end{equation}
We stress that the model obtained combining \eqref{eq:T_visco_fluid} and \eqref{eq:Bref_evolution} is possibly the simplest model that satisfies the physical and mathematical considerations given above, but surely not the only possible choice.

The differential problem that describes the viscoelastic flow in $[0,T]\times\Omega$ thus becomes 
\begin{subequations}
\begin{align}
\rho \adv{u}\vc u&=-\nabla p +\eta\Delta\vc u+\kappa\dvg(\log\B-\log\Bref),\label{eq:flow_fluid}\\
\adv{u}\B&=\nabla\vc u\B+\B\nabla\vc u^\tsp,\label{eq:transport_B_2}\\
\adv{u}\Bref&=\vt Q\Bref+\Bref\vt Q^\tsp\notag\\
\phantom{\adv{u}\Bref=}&\quad+ \taur^{-1}\big[(\log\B-\log\Bref)\Bref+\Bref(\log\B-\log\Bref)\big],\label{eq:transport_Bref}\\
\dvg\vc u&=0,\;
\det\B=\det\Bref=1,\\
\vc u(0,\cdot)&=\vc u_0,\;
\B(0,\cdot)=\Bref(0,\cdot)=\B_0,\label{eq:initB}\\
\vc u\vert_{\de\Omega}&=\vc 0.
\end{align}\label{eq:sistema-fluido}
\end{subequations}

\subsection{A priori estimates}
We now prove a formal a priori estimate on $\Bref$ that plays a key role in what follows. Assume that $\Bref$ is a smooth tensor which satisfies \eqref{eq:transport_Bref}. We multiply equation \eqref{eq:transport_Bref} by $\Bref^{-1}\log\Bref$ and observe that
\begin{itemize}
    \item[(i)] $\de_t\Bref:\Bref^{-1}\log\Bref=\frac12\frac{\dee}{\dee t}\vert\log\Bref\vert^2$,
    \item[(ii)] $(\uu\cdot\nabla)\Bref:\Bref^{-1}\log\Bref=\frac12(\uu\cdot\nabla)\vert\log\Bref\vert^2$,
    \item[(iii)] $\vt Q\Bref:\Bref^{-1}\log\Bref=\tr(\vt Q^\tsp\log\Bref)=\vt Q:\log\Bref$,
    \item[(iv)] $\Bref \vt Q^\tsp:\Bref^{-1}\log\Bref=\vt Q:\log\Bref$,
    \item[(v)] $(\log \B-\log\Bref)\Bref:\Bref^{-1}\log\Bref=(\log \B-\log\Bref):\Bref$,
    \item[(vi)] $\Bref(\log \B-\log\Bref):\Bref^{-1}\log\Bref=(\log \B-\log\Bref):\Bref$.
\end{itemize}
Then, by using the identities above and integrating in space we obtain
\begin{multline}
    \frac{\dee}{\dee t}\|\log\Bref(t,\cdot)\|_{L^2}^2=4\int_\Omega \vt Q:\log \Bref \dee x\\
    +\frac{4}{\taur}\int_\Omega \log\B:\log\Bref \dee x-\frac{4}{\taur} \|\log\Bref\|^2_{L^2}.
\end{multline}
The last term on the right hand side above is negative, then we can drop it and integrate in time to get
\begin{multline}
    \|\log\Bref(t,\cdot)\|_{L^2}^2\leq 4\int_0^t\int_\Omega \vt Q:\log \Bref \dee x\dee s\\
    +\frac{4}{\taur}\int_0^t\int_\Omega \log\B:\log\Bref \dee x\dee s+\|\log\Bref(0,\cdot)\|_{L^2}^2,
\end{multline}
and, by using Young's inequality, we obtain
\begin{multline}
 \|\log\Bref\|_{L^\infty L^2}^2\leq 2\varepsilon_1\|\vt Q\|_{L^2L^2}^2+2T\left(\frac{1}{\varepsilon_1}
 +\frac{1}{\taur\varepsilon_2}\right)\|\log\Bref\|_{L^\infty L^2}^2\\
 +\frac{2\varepsilon_2T}{\taur}\|\log\B\|_{L^\infty L^2}^2+\|\log\B_0\|_{L^2}^2.
\end{multline}
By choosing $\varepsilon_1=8T$ and $\varepsilon_2=8T/\taur$ and since $\vt Q$ is a projection of $\nabla\uu$, we finally get
\begin{equation}\label{est:Bref}
    \|\log\Bref\|_{L^\infty L^2}^2\leq 32T\|\nabla\uu\|_{L^2L^2}^2+\frac{32T^2}{\taur^2}\|\log\B\|_{L^\infty L^2}^2+2\|\log\B_0\|_{L^2}^2.
\end{equation}

A second important relation is an identity involving the quantity $\log\B-\log\Bref$. In the model under consideration, thanks to the properties of the tensorial evolution equations, if $\B$ and $\Bref$ have the same eigenvectors at time $t=0$, then they commute at any time. Under such provisions, we have $\log\B-\log\Bref=\log(\B\Bref^{-1})$ and the tensor field $\B\Bref^{-1}$ is symmetric positive definite and with unit determinant. 
In light of the identity~\eqref{eq:inverse_derivative} presented in Appendix~\ref{app:estimates}, we have
\begin{equation}\label{eq:BBref}
\adv{u}(\B\Bref^{-1})=\nabla\uu\B\Bref^{-1}+\B\nabla\uu^\tsp\Bref^{-1}-\B\Bref^{-1}\vt A-\B\vt A^\tsp\Bref^{-1}.
\end{equation}
Multiplying equation \eqref{eq:BBref} by $(\Bref\B^{-1})\log(\B\Bref^{-1})$, taking into account that all tensors involved except $\nabla\uu$ and $\vt A$ are symmetric and commute with each other, and integrating in space, we obtain
\[
\frac{\dee}{\dee t}\|\log(\B\Bref^{-1})\|^2_{L^2}=4\int_\Omega(\nabla\uu-\vt A):\log(\B\Bref^{-1})\dee x.
\]
By considering that $\vt A=\vt Q+\taur^{-1}\log(\B\Bref^{-1})$ and that $\vt Q$ is orthogonal to symmetric tensors that commute with $\B$, we finally arrive at the identities
\begin{equation}
\frac{\dee}{\dee t}\|\log(\B\Bref^{-1})\|^2_{L^2}=4\int_\Omega\nabla\uu:\log(\B\Bref^{-1})\dee x-\frac{4}{\taur}\|\log(\B\Bref^{-1})\|^2_{L^2},\label{eq:Brefid1}
\end{equation}
and
\begin{multline}
\|\log(\B\Bref^{-1})(t,\cdot)\|^2_{L^2}\\=4\int_0^t\int_\Omega\vt D:\log(\B\Bref^{-1})\dee x\dee s-\frac{4}{\taur}\int_0^t\|\log(\B\Bref^{-1})\|^2_{L^2}\dee s,\label{eq:Brefid2}
\end{multline}
where we used the symmetry of $\log(\B\Bref^{-1})$ and the initial condition~\eqref{eq:initB}.

\subsection{Existence of Leray weak solutions}
In this section we prove an existence theorem of weak solutions for the system \eqref{eq:sistema-fluido}. 
In essence, we need to extend the notion of charted weak solution, to cover the equation for $\Bref$, and that of Leray weak solutions and then follow arguments analogous to those in the previous sections.

\begin{definition}\label{def:charted_extended}
Given $\vc u\in L^\infty([0,T];H)\cap L^2([0,T];V)$, $\B\in\BB_T$, and $\B_0\in\BB$, we say that $\Bref\in \BB_T$ is a \emph{charted weak solution} of the transport equation \eqref{eq:Bref_evolution} with initial datum $\B_0$ if
there exist sequences $\{\vc u_k\}$, $\{\log\B_{k}\}$, and $\{\log\B_{0,k}\}$ of smooth fields that satisfy
\begin{itemize}
    \item[(i)] $\uu_k\weaktos\uu$ in $L^\infty([0,T];H)\cap L^2([0,T];V)$,
    \item[(ii)] $\log \B_{k}\weaktos\log\B$ in $L^\infty([0,T];L^2(\Omega;\MM))$,
    \item[(iii)] $\log \B_{0,k}\weakto\log\B_0$ in $L^2(\Omega;\MM)$,
\end{itemize}
and such that the corresponding sequence of smooth solutions $\{\tC_k\}$ of \eqref{eq:Bref_evolution} with advecting field $\vc u_k$, driving $\B_k$, and initial condition $\B_{0,k}$ satisfies
\[
\log\tC_k\weaktos\log \Bref \hspace{0.4cm}\mbox{in }L^\infty([0,T];L^2(\Omega;\MM)).
\]
In particular, $\Bref$ is the limit of $\{\tC_k\}$ in $\BB_T$ with respect to the charted weak topology.
\end{definition}

\begin{thm}\label{thm:existence_charted_Bref}
Given $T>0$, for any $\B_0\in\BB$, $\vc u\in L^\infty([0,T];H)\cap L^2([0,T];V)$ and $\B\in \BB_T$, there exists a charted weak solution $\Bref\in \BB_T$ of equation \eqref{eq:Bref_evolution} with initial condition $\B_0$.
\end{thm}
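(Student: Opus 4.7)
The plan is to adapt the proof of Theorem~\ref{lem:esistenza-charted} to the richer system. First I would construct smooth approximating sequences $\{\uu_k\}$, $\{\log\B_k\}$, $\{\log\B_{0,k}\}$ by standard mollification, setting $\B_{0,k}\coloneqq\exp[(\log\B_0)_k]$ as in Theorem~\ref{lem:esistenza-charted} so that conditions (i)--(iii) of Definition~\ref{def:charted_extended} are satisfied. For each $k$, the projection $\vt Q_k$ of $\nabla\uu_k$ onto the orthogonal complement of symmetric tensors commuting with $\B_k$ is a smooth tensor field (see Appendix~\ref{app:rotation}); hence the Cauchy problem \eqref{eq:Bref_evolution} with data $(\uu_k,\B_k,\B_{0,k})$ becomes a transport equation whose right-hand side is a smooth function of the unknown $\tC_k$ on the open cone of symmetric positive-definite tensors and, crucially, is tangent to the submanifold of symmetric unit-determinant tensors (as discussed in Section~\ref{sec:incompressibility}). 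Classical characteristic arguments then produce a unique smooth solution $\tC_k$ on $[0,T]$ taking values in that submanifold.

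The central step is the \emph{a priori} estimate. Repeating on each $\tC_k$ the formal computation of Section~\ref{sec:fluid_model} that led to \eqref{est:Bref}, I obtain
\[
\|\log\tC_k\|_{L^\infty L^2}^2\leq 32T\|\nabla\uu_k\|_{L^2L^2}^2+\frac{32T^2}{\taur^2}\|\log\B_k\|_{L^\infty L^2}^2+2\|\log\B_{0,k}\|_{L^2}^2.
\]
Because of the properties of the mollifiers and the convergences (i)--(iii), the right-hand side is uniformly bounded in $k$; a trivial time integration then gives a uniform bound in $L^2([0,T]\times\Omega;\MM)$ as well, so $\{\tC_k\}\subset\BB_T$ with a uniform norm bound. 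Banach--Alaoglu then yields a subsequence with $\log\tC_k\weaktos L$ in $L^\infty([0,T];L^2(\Omega;\MM))$. The linear constraints of symmetry and vanishing trace are preserved in the weak-$*$ limit, so setting $\Bref\coloneqq\exp L$ produces a symmetric positive-definite field with unit determinant, placing $\Bref$ in $\BB_T$ with $\log\Bref=L$; by construction, $\Bref$ meets Definition~\ref{def:charted_extended}.

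The main obstacle is precisely the one flagged after Theorem~\ref{lem:esistenza-charted}: the limit cannot be recognized as a distributional solution of \eqref{eq:Bref_evolution}, since the products $\vt Q_k\tC_k$, $(\log\B_k)\tC_k$, and $(\log\tC_k)\tC_k$ are nonlinear in quantities that are controlled only in weak topologies, and, at the tensorial level, derivatives and products do not commute with the matrix logarithm in the way the scalar renormalization theory would require. This is exactly the reason for adopting the charted weak formulation, which encodes the solution through the approximation scheme itself rather than by an equation in the sense of distributions.
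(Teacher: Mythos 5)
Your proposal is correct and follows exactly the route the paper intends: the paper gives no detailed proof, only the remark that the result is ``a mere adaptation of that of Theorem~\ref{lem:esistenza-charted} based on the availability of the estimate \eqref{est:Bref}'', which is precisely the mollification--smooth-solution--a priori estimate--weak-$*$ compactness scheme you carry out. Your version is in fact more explicit than the paper's, including the correct caveats about the nonlinear dependence of the right-hand side on $\tC_k$ and the reason the limit is only recognized in the charted weak sense.
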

\begin{rem}
The proof of Theorem~\ref{thm:existence_charted_Bref} is a mere adaptation of that of Theorem~\ref{lem:esistenza-charted} based on the availability of the estimate \eqref{est:Bref}.
\end{rem}

\begin{definition}\label{def:weak-solution2}
A triple $(\uu,\B,\Bref)$ is a {\em Leray weak solution} of the differential problem \eqref{eq:sistema-fluido} if
\begin{itemize}
    \item[(i)] $\uu\in L^\infty([0,T];H)\cap L^2([0,T];V)$ satisfies equation \eqref{eq:flow_fluid} in the distributional sense, namely
    \begin{multline}
        \dint\rho\left( \uu\,\de_t\vc\Theta+\uu\otimes\uu:\nabla\vc\Theta\right)\dee x\dee t+\int_\Omega \uu_0\vc\Theta(0,\cdot)\dee x\\
        =-\dint\left(2\eta\vt D+\kappa(\log\B-\log\Bref)\right):\nabla\vc\Theta\,\dee x \dee t,
    \end{multline}
    for all divergence-free test functions $\vc\Theta\in C^\infty_c([0,T)\times\Omega)$;
    \item[(ii)] $\B\in\BB_T$ is a charted weak solution of equation \eqref{eq:transport_B_2} with initial datum $\B_0$;
    \item[(iii)] $\Bref\in\BB_T$ is a charted weak solution of equation \eqref{eq:transport_Bref} with initial datum $\B_{0}$;
    \item[(iv)] the triple $(\uu,\B, \Bref)$ satisfies, for almost every $t\in[0,T]$, the energy inequality
    \begin{multline}
       \rho\|\uu(t,\cdot)\|_{L^2}^2+\frac{\kappa}{2}\|\log\B(t,\cdot)-\log\Bref(t,\cdot)\|_{L^2}^2+4\eta\int_0^t\|\vt D(s,\cdot)\|_{L^2}^2\,\dee s\\
       +\frac{2\kappa}{\taur}\int_0^t\|\log\B(s,\cdot)-\log\Bref(s,\cdot)\|_{L^2}^2\,\dee s
       \leq \rho\|\uu_0\|_{L^2}^2.
    \end{multline}
\end{itemize}
\end{definition}

Note that, in this case, the energy inequality features an additional term, proportional to $\kappa/\taur$, that represents the energy dissipated by the plastic evolution of $\Bref$.

\begin{thm}
Given $T>0$, for any $\uu_0\in H$ and $\B_0\in \BB$, there exists a Leray weak solution $(\uu,\B,\Bref)$ of the differential problem \eqref{eq:sistema-fluido} in the sense of Definition \eqref{def:weak-solution2}.
\end{thm}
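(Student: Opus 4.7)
The plan is to follow the Galerkin--Leray--Schauder scheme of Theorem~\ref{thm:main1}, now augmented by the evolution of the relaxed strain. With $V_n$ and the projector $\Pi_n$ as in that proof, and given a smooth $\vv_n\in C([0,T];V_n)$, we build three approximations: $\B_n$ as the matrix exponential of the smooth solution of a transport equation for $\log\B_n$ advected by $\vv_n$ with initial datum $(\log\B_0)_n$; $\tC_n$ as the smooth solution of~\eqref{eq:Bref_evolution} with the same advecting field $\vv_n$, driving $\B_n$, and identical initial condition (existence at the smooth level follows as in Theorem~\ref{thm:existence_charted_Bref} since $\vv_n$ is smooth on a finite-dimensional subspace); and $\uu_n\in C([0,T];V_n)$ as the solution of the projected ODE system obtained from~\eqref{eq:flow_fluid} with forcing $\kappa\Pi_n\dvg(\log\B_n-\log\tC_n)$. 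The key \emph{a priori} estimates combine~\eqref{est:norm_B} for $\B_n$, \eqref{est:Bref} for $\tC_n$, and an energy balance for $\uu_n$ analogous to \emph{Step~2} of Theorem~\ref{thm:main1}, now featuring both $\log\B_n$ and $\log\tC_n$ in the forcing; Young's inequality yields uniform control of $\|\uu_n\|_{L^\infty H\cap L^2 V}$, and hence global-in-time existence of the Galerkin solutions.

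To apply Theorem~\ref{thm:leray-schauder} to $\mathcal F\colon\vv_n\mapsto\uu_n$, the delicate point is the uniform boundedness of $S=\{\vv_n:\mathcal F(\vv_n)=\lambda\vv_n\text{ for some }\lambda>1\}$. Here we exploit identity~\eqref{eq:Brefid2} applied to $(\B_n,\tC_n)$ with advecting field $\vv_n$: testing the rescaled momentum equation with $\vv_n$ generates the cross term $-\kappa\int_0^t\!\int_\Omega \vt D_n:\log(\B_n\tC_n^{-1})\,\dee x\,\dee s$, and combining with $(\kappa/4)$ times~\eqref{eq:Brefid2} produces an energy balance whose left-hand side is $(\lambda\rho/2)\|\vv_n(t,\cdot)\|^2_{L^2}+\lambda\eta\|\nabla\vv_n\|^2_{L^2L^2}$ plus two nonnegative terms in $\log(\B_n\tC_n^{-1})$, and whose right-hand side is $(\lambda\rho/2)\|\Pi_n\uu_0\|^2_{L^2}$, where we used $\tC_n(0,\cdot)=\B_n(0,\cdot)$ to discard initial contributions from the log-difference. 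The factor $\lambda$ cancels on both the kinetic and the viscous terms, producing a bound on $\|\vv_n\|_{L^\infty H\cap L^2 V}$ independent of $\lambda>1$. Complete continuity of $\mathcal F$ via Ascoli--Arzel\`a is then inherited from \emph{Step~3} using the $L^{4/3}([0,T];V^*)$ estimate~\eqref{stima:det-un}, to which $\log\tC_n$ contributes in the same way as $\log\B_n$.

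The passage to the limit follows \emph{Step~4} essentially verbatim. Weak-$\ast$ limits $\uu$, $\log\B$ and $\log\Bref$ exist; $\B$ and $\Bref$ are, by construction of the approximating sequence, charted weak solutions in the sense of Definitions~\ref{def:charted} and~\ref{def:charted_extended}. Lions's Lemma~\ref{lem:lions} together with~\eqref{stima:det-un} yields the strong $L^2$ compactness of $\uu_n$ needed to pass to the limit in the nonlinearity $\uu_n\otimes\uu_n$, while the forcing $\log\B_n-\log\tC_n$ enters the momentum equation linearly, so its weak convergence is sufficient to recover item~(i) of Definition~\ref{def:weak-solution2}. The energy inequality~(iv) follows by lower semi-continuity from the Galerkin balance already used in the fixed-point step, combined with the strong convergence of the projected initial data.

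The main obstacle we anticipate is the nonlinear dependence of the projection $\vt Q$ on $\B$ in equation~\eqref{eq:transport_Bref}, which prevents a direct distributional limit passage in that equation. This is exactly the difficulty the charted weak solution framework is designed to sidestep: $\Bref$ is characterised only as the weak-$\ast$ limit of smooth exact solutions, not by a distributional identity. A closely related technical point is the preservation of the commutativity of $\B_n$ and $\tC_n$, necessary to write $\log\B_n-\log\tC_n=\log(\B_n\tC_n^{-1})$ in the fixed-point energy estimate; this is inherited from $\tC_n(0,\cdot)=\B_n(0,\cdot)$ together with the structure of equations~\eqref{eq:transport_B_2} and~\eqref{eq:transport_Bref}, which diagonalise jointly along characteristics provided the initial data do.
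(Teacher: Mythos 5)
Your proposal follows essentially the same route as the paper: a Galerkin scheme coupled with the smooth solutions of the two tensorial transport equations, global existence of the Galerkin approximations via the \emph{a priori} estimates \eqref{est:norm_B} and \eqref{est:Bref}, a Leray--Schauder fixed point whose $\lambda$-independent bound comes from cancelling the cross term against identity \eqref{eq:Brefid2} (your $\kappa/4$ versus the paper's $\kappa/2$ is just a difference in normalizing the kinetic energy), compactness of $\uu_n$ via Lemma~\ref{lem:lions}, and the energy inequality by lower semi-continuity. The points you flag as delicate --- the commutativity of $\B_n$ and the approximate $\Bref$ inherited from the common initial datum, and the fact that $\Bref$ is characterised only as a charted weak limit rather than distributionally --- are exactly how the paper handles them.
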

\begin{proof}
The proof is very similar to that of Theorem \ref{thm:main1}, hence, we only describe the differences.\\
\\
\underline{\em Step 1}\hspace{0.3cm}{\bf Construction of the approximating sequence (I).}
We start by constructing and approximating sequence $\uu_n$ in the same way of {\em Step 1} of Theorem \ref{thm:main1}. We fix a $\vv\in C([0,T];H^1_0(\Omega))$ and we define $\vc v_n=\Pi_n\vv\in C([0,T]; V_n)$, where $\Pi_n:L^2(\Omega)\to V_n$ is the standard projector. Then, consider the following linear system
\begin{equation}\label{eq:sistema_approssimante_lineare_bis}
\left\{
    \begin{aligned}
    &\rho\partial_t\vc u_n+\rho\Pi_n\left[(\vc v_n\cdot\nabla)\vc u_n\right]=\eta\Pi_n\Delta\vc u_n+\kappa\Pi_n\dvg(\log\B_n-\log\Brefn),\\
    &\dvg \uu_n=0,\\
    &\partial_t\B_n+(\vc v_n\cdot\nabla)\B_n=\nabla \vc v_n\B_n+\B_n(\nabla \vc v_n)^\tsp,\\
    &\de_t\Brefn+(\vc v_n\cdot\nabla)\Brefn=\vt Q_n\Brefn+\Brefn\vt Q_n^\tsp\\ 
    &\phantom{\de_t\Brefn+}+ \frac{1}{\taur}\big[(\log\B_n-\log\Brefn)\Brefn+\Brefn(\log\B_n-\log\Brefn)\big],\\
	&\uu_n(0,\cdot)=\Pi_n \uu_0,\hspace{0.3cm}\B_n(0,\cdot)=\Brefn(0,\cdot)=\B_{0,n},
    \end{aligned}\right.
\end{equation}
where $\vt Q_n$ is a suitable projection of $\nabla\vc u_n$ described in Appendix~\ref{app:rotation}.
Since $\vc v_n$ is smooth, there exists a unique solution of the above system, at least for small times $t<T_n$. \\
\\
\underline{\em Step 2}\hspace{0.3cm}{\bf Energy estimates on $\uu_n$.}
In full analogy with {\em Step 2} of Theorem \ref{thm:main1}, we only need to prove an energy estimate on $\uu_n$. This follows form the use of Theorem \ref{lem:esistenza-charted} and of the a priori estimate \eqref{est:Bref} and leads to
\begin{multline}
    \rho\|\uu_n(t,\cdot)\|^2_{L^2}+\eta\int_0^t\|\nabla\uu_n(s,\cdot)\|^2_{L^2}\dee s\leq \rho\|\Pi_n\uu_0\|^2_{L^2}\\
    +\frac{\kappa^2}{\eta}\int_0^t\|\log\B_n(s,\cdot)-\log\Brefn(s,\cdot)\|^2_{L^2}\dee s\\
    \leq \rho\|\uu_0\|^2_{L^2}+\frac{\kappa^2}{\eta\taur^2}\left[16T(3\taur^2+32T^2)\|\nabla\vv_n\|^2_{L^2L^2}+2(2\taur^2+32T^2)\|\log\B_0\|^2_{L^2})\right],
\end{multline}
for all $0\leq t< T_n<T$. Since $\|\nabla\vv_n\|^2_{L^2L^2}$ is uniformly bounded, $\uu_n$ does not blow up in finite time.
\\
\\
\underline{\em Step 3}\hspace{0.3cm}{\bf Construction of the approximating sequence (II).}
We consider again the operator
\begin{equation}
    \mathcal{F}: \vv_n\in C([0,T];V_n)\mapsto \uu_n \in C([0,T];V_n).
\end{equation}
The proof of the equicontinuity of $\uu_n$ simply requires substituting $(\log\B_n-\log\Brefn)$ for $\log\B_n$ in the argument of Theorem \ref{thm:main1}.
Since $\log\B_n$ and $\log\Brefn$ belong to the same space, no difficulty arises.

To prove that the set $S\coloneqq\{\vv\in C([0,T];V_n):\mathcal{F}(\vv)=\lambda \vv \text{ for some }\lambda>1\}$ is bounded we proceed as follows.
We fix $\lambda>1$ and substitute $\lambda\vv_n$ in place of $\uu_n$ in system~\eqref{eq:sistema_approssimante_lineare_bis}, then we multiply the first equation by $\vv_n$ and integrate in space and time to obtain
\begin{multline}\label{bilancio-vn-bis}
    \lambda\rho\int_{\Omega}\vert\vv_n\vert^2\dee x+2\lambda\eta\int_0^t\int_{\Omega}\vert\nabla \vv_n\vert^2\dee x\dee s\\
    =\lambda\rho\int_{\Omega}\vert\Pi_n\uu_0)\vert^2\dee x-2\kappa\int_0^t\int_{\Omega}(\log\B_n-\log\Brefn):\nabla\vv_n\dee x\dee s.
\end{multline}
By taking the sum of \eqref{bilancio-vn-bis} with $\kappa/2$ times \eqref{eq:Brefid2} applied to $\B_n$ and $\Brefn$, we easily find 
\begin{equation}\label{sbilancio-vn-bis}
    \int_{\Omega}\vert\vv_n\vert^2\dee x+\frac{2\eta}{\rho}\int_0^t\int_{\Omega}\vert\nabla \vv_n\vert^2\dee x\dee s
    \leq\|\uu_0\|^2_{L^2},
\end{equation}
that provides a $\lambda$-independent bound on $\vv_n$ and the boundedness of $S$ by norm equivalence.
\\
\\
\underline{\em Step 4}\hspace{0.3cm}{\bf Convergence of the approximating sequence.}
The convergence of $\uu_n$, $\B_n$, and $\Brefn$ can be shown precisely as in {\em Step 4} of Theorem \ref{thm:main1}.
\\
\\
\underline{\em Step 5}\hspace{0.3cm}{\bf Energy inequality.}
Note that, by arguing as in the proof of {\em Step 3} with $\lambda=1$ and by recalling the identity \eqref{eq:DeqGrad}, the triple $(\uu_n,\B_n,\Brefn)$ satisfies the following identity involving kinetic, elastic, and dissipated energy:
\begin{multline}
\rho\|\uu_n(t,\cdot)\|_{L^2}^2+\frac{\kappa}{2}\|\log\B_n(t,\cdot)-\log\Brefn(t,\cdot)\|_{L^2}^2+4\eta\int_0^t\|\vt D_n(s,\cdot)\|_{L^2}^2\,\dee s\\
+\frac{2\kappa}{\taur}\int_0^t\|\log\B_n(s,\cdot)-\log\Brefn(s,\cdot)\|_{L^2}^2\,\dee s
= \rho\|\Pi_n\uu_0\|_{L^2}^2.
\end{multline}
Then, by the lower semi-continuity of the norms with respect to weak convergence and the strong convergence of the initial datum, we get that $(\uu,\B,\Bref)$ satisfies the energy inequality in Definition \ref{def:weak-solution2}.
\end{proof}

\section{Further applications}
\label{sec:further_applications}

In this section, we highlight the broader scope of applicability of charted weak solutions by recalling the form of objective rates that have been proposed in connection with non-Newtonian fluid models requiring the transport of tensorial quantities.

In this work we proposed to follow the evolution of two distinct tensor fields, $\B$ and $\Bref$, with the former linked to the kinematics of the current deformation and the latter encoding the local elastically neutral state, and then build the elastic response out of a combination of those.
A more standard approach seeks to model the evolution of the viscoelastic stress by postulating an evolution for either the stress itself or a conformation tensor, considered as a proxy for the evolving structural properties of the material. 
A cornerstone for this approach is the seminal paper by Oldroyd~\cite{Oldroyd_1950}. 

Let us denote by $\vt A$ the evolving tensor field of interest. Following Oldroyd's reasoning, in situations where dissipative interactions are dominant and microscopic inertial effects should be neglected, the material response cannot depend on a global uniform acceleration of the material points~\cite{Phan-Thien_2013}. 
Hence, particular attention should be paid to the way $\vt A$ transforms after a change of reference frame.
Given a time-dependent frame rotation $\vt{Q}$, an objective tensor $\vt{A}$ transforms as
\(
\vt{A}^* = \vt{Q} \vt{A} \vt{Q}^\tsp.
\)
In general, the rate $\adv{u}\vt{A}$ of an objective tensor is \emph{not} objective~\cite{Gurtin_2010}. Indeed, after defining the frame spin as the anti-symmetric tensor $\vt{\Omega} = (\adv{u}\vt{Q}) \vt{Q}^\tsp$ and recalling that $\vt{\vt{Q}}^\tsp \vt{\vt{Q}} = \vt{I}$, we find
\[
\adv{u}\vt{A}^* = \vt{Q} \adv{u}\vt{A} \vt{Q}^{\tsp} + (\adv{u}\vt{Q}) \vt{A} \vt{Q}^\tsp + \vt{Q} \vt{A} \adv{u}\vt{Q}^{\tsp} 
= \vt{Q} (\adv{u}\vt{A}) \vt{Q}^{\tsp} +\vt\Omega \vt A^*-\vt A^*\vt\Omega.
\]
However, by the transformation law of the anti-symmetric part of the velocity gradient, that reads
$\vt{W}^* = \vt{Q} \vt{W} \vt{Q}^\tsp + \vt{\Omega}$, it is possible to define an objective corotational rate for the tensor $\vt{A}$ as
\begin{equation} \label{eq:frameind1}
\overset{\circ}{\vt{A}} \coloneqq \adv{u}\vt{A} + \vt{A} \vt{W} - \vt{W} \vt{A}.
\end{equation}
Objectivity is equally preserved if we add objective tensors or a combination of them on the right-hand side. In particular, the symmetric part $\vt D$ of the velocity gradient $\nabla\uu$ is objective, and adding $\vt{A} \vt{D} + \vt{D} \vt{A}$ to \eqref{eq:frameind1} we can get the covariantly-convected (or lower-convected) rate 
\begin{equation}\label{eq:lower_convected}
\overset{\vt{\Delta}}{\vt{A}} \coloneqq \adv{u}\vt{A} + \vt{A} \nabla \vc{u} + \nabla \vc{u}^\tsp \vt{A},
\end{equation}
adding $\vt{A} \vt{D} - \vt{D} \vt{A}$ we obtain the contravariantly-convected rate
\begin{equation}
\overset{\diamond}{\vt{A}} \coloneqq \adv{u}\vt{A} - \nabla \vc{u} \vt{A} - \vt{A} \nabla \vc{u}^\tsp,
\end{equation}
while adding $-2\vt{A} \nabla \vc{u} -2 \nabla \vc{u}^\tsp \vt{A}$ to \eqref{eq:lower_convected} we obtain the upper-convected (or Oldroyd's) rate
\begin{equation}
\overset{\nabla}{\vt{A}} \coloneqq \adv{u}\vt{A} - \vt{A} \nabla \vc{u} - \nabla \vc{u}^\tsp \vt{A}.
\end{equation}

Evolution equations featuring any of these objective rates, widely used in non-Newtonian fluid models, can be tackled using the notion of charted weak solutions. Importantly, additional relaxation terms in the tensorial evolution should be postulated in conjunction with the form of the elastic stress in ways that lead to a good interaction with the momentum balance equation. This would allow  to obtain existence results for their coupled evolution, as is the case for the models presented above.

Finally, further directions in the construction of viscoelastic models that could benefit from our approach involve considering the role of finite extensibility of the polymeric chains that confer viscoelastic properties to several materials. This results in a strongly nonlinear response under continuous deformation and in an evolution for the relaxed state dominated by plastic effects.
The emphasis we placed on the evolution equation for the relaxed state as a basic ingredient to describe viscoelasticity represents a key step in connecting non-Newtonian fluid mechanics to plasticity theory in an Eulerian setting.

\section{Discussion of open problems}\label{sec:open_problems}

Regarding the notion of charted weak solutions introduced above, there are a few important questions that remain open. First of all, we may ask whether the weak convergence of approximate solutions in Definition~\ref{def:charted} is actually a strong convergence.
There are examples of similar equations in which it is possible to obtain such improved convergence by careful estimates of remainder terms in the sequence of norm defects~\cite{LM,Masmoudi_2011,Bejaoui_2013}.
Exploring that direction, we have found essential obstructions to the argument due to the inherent lack of integrability of the rotation term $\vt\Omega$ in the logarithmic strain equation~\eqref{eq:logB_evol}.
The potential mismatch of the eigenvectors between different terms of the weakly converging sequence $\{\log\B_k\}$ is also the main source of difficulties in proving a Cauchy property, and hence strong convergence, of that sequence. This is another instance of the deep difference between what can be achieved in the analysis of tensorial equations as opposed to scalar ones.

Related to the issue of strong convergence, there is the question of whether a charted weak solution of \eqref{eq:B} is a solution in a more classical sense. Even in the presence of a strong convergence, the lack of integrability estimates for the tensor field $\B$ would prevent interpreting \eqref{eq:B} in the sense of distributions. 
One could try to base the meaning of \eqref{eq:B} on a notion of distributional solution for the logarithmic strain equation \eqref{eq:logB_evol}, but we face again a lack of estimates on the source terms appearing in the latter equation. 
A different way to tackle this issue would be to consider equation \eqref{eq:F} for the tensor field $\F$. In the work by Kalousek~\cite{Kalousek_2019}, the author proves that if $\F_0\in L^2$ and $\dvg \F_0=0$, then one can give a distributional meaning to equation \eqref{eq:F}. However, although in principle we could assume these further properties, it is not clear how to provide a link between the charted weak solution $\B$ and the weak limit $\F$. In spite of these difficulties, we believe that it is of paramount importance to place the tensorial evolution equations in the manifold setting suggested by the mechanical meaning of the tensor fields. This can in fact provide a solid ground for the treatment of finite-deformation problems, as shown by our results that apply to arbitrarily large deformations. 

Finally, the tensorial nature of the transported quantities and the consequent lack of commutativity prevent us from proving uniqueness of charted weak solutions using direct computations, in spite of the availability of \emph{a priori} estimates. This is due to the fact that such estimates control the eigenvalues of the tensor fields (which is sufficient to obtain weak compactness) but cannot constrain the rotation of eigenvectors in an effective way. 
Obviously, one should address all of these points for charted weak solutions of \eqref{eq:B} with a given velocity field before working on possible improvements of the results regarding the coupled equations of viscoelastic models.

\appendix

\section{Details on \emph{a priori} estimates}
\label{app:estimates}

In this section we establish some accessory results useful in obtaining identities and estimates. We begin by considering a symmetric tensor field $\tC$ that can be diagonalized as $\tC=\vt R\vt D\vt R^\tsp$, with $\vt D$ diagonal and $\vt R$ orthogonal.
By differentiating the relation $\vt R\vt R^\tsp=\I$, we find that the tensor field $\vt\Omega_{\vt R}:=(\adv{u}\vt R)\vt R^\tsp$ is anti-symmetric. 
This entails the fundamental relation
\begin{equation}
\adv{u}\tC=\vt\Omega_{\vt R}\tC-\tC\vt\Omega_{\vt R}+\vt R(\adv{u}\vt D)\vt R^\tsp.
\end{equation}
If we now consider a matrix-valued analytic function $f$ of $\tC$ such that $f(\tC)=\vt Rf(\vt D)\vt R^\tsp$, we find that
\begin{multline}
\adv{u}f(\tC)=\vt\Omega_{\vt R}f(\tC)-f(\tC)\vt\Omega_{\vt R}+f'(\tC)\vt R(\adv{u}\vt D)\vt R^\tsp\\
=\vt\Omega_{\vt R}f(\tC)-f(\tC)\vt\Omega_{\vt R}+f'(\tC)(\adv{u}\tC-\vt\Omega_{\vt R}\tC+\tC\vt\Omega_{\vt R}).
\end{multline}

We are interested in two particular cases of matrix functions, namely the inverse $\tC^{-1}$ and the logarithm $\log\tC$. If $\det\tC\neq 0$, exploiting the commutation properties of diagonal matrices, we have
\begin{multline}\label{eq:inverse_derivative}
\adv{u}\tC^{-1}=\vt\Omega_{\vt R}\tC^{-1}-\tC^{-1}\vt\Omega_{\vt R}-\tC^{-1}(\adv{u}\tC-\vt\Omega_{\vt R}\tC+\tC\vt\Omega_{\vt R})\tC^{-1}\\
=-\tC^{-1}(\adv{u}\tC)\tC^{-1}.
\end{multline}
Moreover, if $\tC$ is symmetric and positive definite, we obtain
\begin{equation}\label{eq:log_evol}
\adv{u}\log\tC=\vt\Omega_{\vt R}\log\tC-(\log\tC)\vt\Omega_{\vt R}
-\tC^{-\frac12}\vt\Omega_{\vt R}\tC^{\frac12}+\tC^{\frac12}\vt\Omega_{\vt R}\tC^{-\frac12}
+\tC^{-\frac12}(\adv{u}\tC)\tC^{-\frac12},
\end{equation}
that leads to
\begin{align}
\frac12\adv{u}(\log\tC:\log\tC)&=(\adv{u}\log\tC):\log\tC\notag\\
&=\vt\Omega_{\vt R}\log\tC:\log\tC-(\log\tC)\vt\Omega_{\vt R}:\log\tC\notag\\
&\quad-\vt\Omega_{\vt R}:\log\tC+\vt\Omega_{\vt R}:\log\tC
+\tC^{-\frac12}(\adv{u}\tC)\tC^{-\frac12}:\log\tC\notag\\
&=\adv{u}\tC:\tC^{-1}\log\tC,\label{eq:log_derivative}
\end{align}
where we used the cyclic property of the trace, the symmetry of $\tC$, and the fact that powers of $\tC$ and $\log\tC$ commute.

\section{Rotation of principal strains}
\label{app:rotation}

We consider a given symmetric matrix $\B\in\mat_d(\bR)$ positive definite and with $\det \B=1$. Denoting by $\vc b_i$ for $i=1,\ldots,d$ orthonormal eigenvectors of $\B$, we can consider the tensors
\begin{gather*}
\vt Z_i=\vc b_i\otimes\vc b_i\qquad\text{for }i=1,\ldots,d,\\
\vt Z_{d+i}=\frac{1}{\sqrt{2}}(\vc b_i\otimes\vc b_j+\vc b_j\otimes\vc b_i)\qquad\text{for }i,j=1,\ldots,d\text{ with }j> i,\\
\vt Z_{\frac{d(d+1)}{2}+i}=\frac{1}{\sqrt{2}}(\vc b_i\otimes\vc b_j-\vc b_j\otimes\vc b_i)\qquad\text{for }i,j=1,\ldots,d\text{ with }j> i,
\end{gather*}
and form the basis $\mathcal Z=\{\vt Z_i:i=1,\ldots,d^2\}$ for $\mat_d(\bR)$, that is othonormal with respect to the scalar product $\vt A:\vt C=\tr(\vt A\vt C^\tsp)$ that defines orthogonality in $\mat_d(\bR)$.

\begin{prop}
The orthogonal complement of the subspace 
\[
\mathcal C_\B:=\{\M\in\mat_d(\bR):\M^\tsp=\M\text{ and }\M\B-\B\M=\vt 0\}
\]
is the sum of the subspaces $\mathcal A$ of anti-symmetric matrices and 
\[
\mathcal K_\B:=\{\M\in\mat_d(\bR):\M\B+\B\M^\tsp=\vt 0\}.
\]
\end{prop}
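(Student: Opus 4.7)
The plan is to split the proposition into two inclusions: $\mathcal{A} + \mathcal{K}_\B \subseteq \mathcal{C}_\B^\perp$ and $\mathcal{C}_\B^\perp \subseteq \mathcal{A} + \mathcal{K}_\B$. The first inclusion reduces to two separate orthogonality statements, while the second is most transparent after diagonalizing everything in the eigenbasis of $\B$.

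For the orthogonality of $\mathcal{A}$ and $\mathcal{C}_\B$, I would simply note that $\mathcal{C}_\B$ consists of symmetric matrices and any antisymmetric matrix is Frobenius-orthogonal to any symmetric one (a standard trace identity), so this is immediate and does not involve $\B$ at all. For the orthogonality of $\mathcal{K}_\B$ and $\mathcal{C}_\B$, I would pick $\M \in \mathcal{K}_\B$ and $\N \in \mathcal{C}_\B$ and show $\tr(\M\N) = 0$ by using $\B\M^\tsp = -\M\B$ (i.e.\ $\M^\tsp = -\B^{-1}\M\B$, valid since $\det\B > 0$) together with the commutation $\B\N = \N\B$ and hence $\B^{-1}\N = \N\B^{-1}$. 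Cyclically manipulating the trace and substituting $\M^\tsp$ yields $\tr(\M\N) = -\tr(\M\N)$, closing the orthogonality.

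For the reverse inclusion, I would decompose in the $\B$-eigenbasis $\{\vc b_i\}$ and work with the elementary dyads $E_{ij} = \vc b_i \otimes \vc b_j$. Writing $\M = \sum M_{ij} E_{ij}$, the subspace $\mathcal{C}_\B$ is spanned by the $E_{ii}$ and the symmetric combinations $E_{ij} + E_{ji}$ with $\lambda_i = \lambda_j$, so $\mathcal{C}_\B^\perp$ consists exactly of matrices with zero diagonal and $M_{ij} + M_{ji} = 0$ whenever $\lambda_i = \lambda_j$. Its natural basis splits into two families: the antisymmetric dyads $E_{ij} - E_{ji}$ (all indices), which lie in $\mathcal{A}$ for free, and the symmetric dyads $E_{ij} + E_{ji}$ restricted to pairs with $\lambda_i \neq \lambda_j$. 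For a basis element of the second type, I would exhibit the explicit element $\M_K = \frac{1}{\lambda_i - \lambda_j}(\lambda_i E_{ij} - \lambda_j E_{ji})$, check directly from $\M_K \B + \B \M_K^\tsp = 0$ that $\M_K \in \mathcal{K}_\B$, and observe that its symmetric part is exactly $\tfrac12(E_{ij} + E_{ji})$; subtracting the antisymmetric part of $2\M_K$ then writes $E_{ij} + E_{ji}$ as an element of $\mathcal{K}_\B + \mathcal{A}$.

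The main obstacle is bookkeeping rather than conceptual: one has to keep straight which pairs $(i,j)$ give repeated eigenvalues, so that $\mathcal{A} \cap \mathcal{K}_\B$ is nontrivial and the sum $\mathcal{A} + \mathcal{K}_\B$ is \emph{not} direct. A clean way to close the argument without worrying about this is to complement the explicit decomposition above by a dimension count, verifying that $\dim(\mathcal{A} + \mathcal{K}_\B) = d^2 - \sum_k m_k(m_k+1)/2 = \dim \mathcal{C}_\B^\perp$, where $m_k$ are the multiplicities of the eigenvalues of $\B$; combined with the two orthogonality statements, this forces the desired equality.
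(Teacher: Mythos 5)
Your proposal is correct, and it takes a genuinely different route from the paper's. The paper argues by a low-dimensional case analysis: it writes out the entries of elements of $\mathcal K_\B$ in the eigenbasis of $\B$, treats separately $\B=\vt I$, the case of distinct eigenvalues, and (for $d=3$) a doubly degenerate eigenvalue, and closes each case by a dimension count (via Grassmann's formula in the degenerate case); the containment $\mathcal A+\mathcal K_\B\subseteq\mathcal C_\B^\perp$ is left implicit there. You instead prove both inclusions explicitly and uniformly: the orthogonality $\mathcal K_\B\perp\mathcal C_\B$ follows from your trace manipulation (using $\M^\tsp=-\B^{-1}\M\B$ and $\B^{-1}\N=\N\B^{-1}$ to get $\tr(\M\N)=-\tr(\M\N)$), and the reverse inclusion from the explicit element $\M_K=(\lambda_i-\lambda_j)^{-1}(\lambda_i E_{ij}-\lambda_j E_{ji})$, which does lie in $\mathcal K_\B$ (since $E_{ij}\B=\lambda_j E_{ij}$ and $\B E_{ji}=\lambda_j E_{ji}$, the terms cancel in $\M_K\B+\B\M_K^\tsp$) and whose symmetric part is exactly $\tfrac12(E_{ij}+E_{ji})$. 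Your identification of $\mathcal C_\B^\perp$ as the matrices with vanishing diagonal and $M_{ij}+M_{ji}=0$ on eigenvalue-degenerate pairs is also correct, and your closing dimension count $\dim(\mathcal A+\mathcal K_\B)=d^2-\sum_k m_k(m_k+1)/2$ matches. The net effect is that your argument works for arbitrary $d$ and arbitrary eigenvalue multiplicities in one stroke, whereas the paper's is tied to $d=2,3$; what the paper's version buys in exchange is the explicit observation, used in the remark that follows, that $\mathcal A\cap\mathcal K_\B=\{\vt 0\}$ precisely when the eigenvalues are distinct, so the sum is direct only in that case --- a point you correctly flag but fold into the dimension count rather than compute.
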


\begin{rem}
Given the defining properties of these subspaces, the projection $\vt Q$ of $\nabla\vc u$ on $\mathcal C_\B^\perp$ is solely and the only responsible for the rotation of the eigenvectors of $\B$ generated by equation \eqref{eq:Bonly}, beyond the effect of mere advection.  
Such an orthogonal projection can be effectively computed by subtracting from $\nabla\vc u$ its projections on the elements of the basis $\mathcal Z$ that generate $\mathcal C_\B$. Note that $\mathcal A$ and $\mathcal K_\B$ need not be orthogonal subspaces of $\mathcal C_\B^\perp$.
\end{rem}

\begin{proof}
To prove our claim, there are a few cases to be considered. If $\B=\vt I$, then $\mathcal C_\B$ comprises all symmetric matrices, $\mathcal K_\B$ is equal to $\mathcal A$, and $\mathcal C_\B^\perp=\mathcal A$. If we now assume that $\B$ has distinct (and positive) eigenvalues $b_i$, $i=1,\ldots,d$, and we represent matrices on the eigenbasis of $\B$ mentioned above, matrices $\M=(m_{ij})$ in $\mathcal K_\B$ are such that
\begin{itemize}
\item for $d=2$ we have $2m_{11}b_1=0$, $2m_{22}b_2=0$, and $m_{12}b_2+b_1m_{21}=0$ from which we conclude that $m_{11}=m_{22}=0$ and $m_{12}=-(b_1/b_2)m_{21}$;
\item for $d=3$ we similarly obtain $m_{11}=m_{22}=m_{33}=0$ and
\[
m_{12}=-(b_1/b_2)m_{21},\qquad m_{13}=-(b_1/b_3)m_{31},\qquad m_{23}=-(b_2/b_3)m_{32}.
\]
\end{itemize}
In both cases, since the eigenvalues are all distinct, we immediately conclude that $\mathcal A\cap\mathcal K_\B=\{\vt 0\}$.
Given that the non-null elements of  $\mathcal K_\B$ are clearly not symmetric, that subspace is not orthogonal to $\mathcal A$.
Nevertheless, the dimensions of $\mathcal K_\B$ and $\mathcal A$ are both equal to $1$, if $d=2$, or $3$, if $d=3$, and add up to the dimension of $\mathcal C_\B^\perp$.
In this case, the latter subspace is a direct sum of the former two.

This settles the matter for $d=2$, but in the three-dimensional case we need to consider what happens if two eigenvalues coincide and are distinct from the third one. Without loss of generality we can assume $b_1\neq b_2 = b_3$. We still obtain $m_{11}=m_{22}=m_{33}=0$ and
\[
m_{12}=-(b_1/b_2)m_{21},\qquad m_{13}=-(b_1/b_2)m_{31},\qquad m_{23}=-m_{32}.
\]
We easily see that the intersection between $\mathcal A$ and $\mathcal K_\B$ has dimension equal to $1$, while both subspaces have dimension $3$.
By Grassmann's formula, $\dim(\mathcal A+\mathcal K_\B)=5=\dim\mathcal C_\B^\perp$, since the degeneracy of the eigenvalues implies $\dim\mathcal C_\B=4$. This concludes our argument.
\end{proof}

Our result gives an alternate proof of a theorem given by Fattal \& Kupferman~\cite{Fattal_2004} and includes the cases in which the eigenvalues are degenerate. We stress that the linear decomposition of a matrix $\M$ as the sum of three terms, belonging respectively to $\mathcal C_\B$, $\mathcal K_\B$, and $\mathcal A$, is unique only when the three subspaces are in a direct sum. Our construction with orthogonal projections on a tensorial basis provides a general way to compute the component of $\nabla\vc u$ that generates only the rotation of the eigenvectors of $\B$.

\section{Equation for the logarithmic strain}\label{app:log-eq}

We can now use the results of the previous Appendices to deduce an evolution equation for $\log\B$ starting from the evolution equation for the left Cauchy--Green tensor $\B$.

First of all, we consider the decomposition $\nabla\uu=\vt\Omega+\vt K+\vt S$, where $\vt S$ is symmetric and commutes with $\B$, $\vt\Omega$ is anti-symmetric and generates the rotation of the eigenvectors of $\B$, and $\vt K$ is such that $\vt K\B+\B\vt K^\tsp=\vt 0$. With this, the evolution equation for $\B$ becomes
\[
\adv{u}\B=\nabla\uu\B+\B\nabla\uu^\tsp=\vt\Omega\B-\B\vt\Omega+2\vt S\B.
\]
We now substitute this result in the equation for $\log\B$ implied by \eqref{eq:log_evol} and, considering that $\vt\Omega=\vt\Omega_{\vt R}$, obtain
\begin{equation}\label{eq:logB_evol}
\adv{u}\log\B=\vt\Omega\log\B-(\log\B)\vt\Omega+2\vt S.
\end{equation}
Even though this equation may seem linear, we should keep in mind that both $\vt\Omega$ and $\vt S$ depend on $\B$ in a nontrivial way.

\subsection*{Acknowledgements} During the preparation of this manuscript G.~Ciampa has been supported by the ERC Starting Grant 101039762 HamDyWWa. G.~Ciampa is partially supported by INdAM-GNAMPA, by the projects PRIN 2020 ``Nonlinear evolution PDEs, fluid dynamics and transport equations: theoretical foundations and applications'', and PRIN2022 ``Classical equations of compressible fluids mechanics: existence and properties of non-classical solutions''. 
The work of G.~G.~Giusteri was partially supported by the National Group for Mathematical Physics (GNFM) of the Italian National Institute for Advanced Mathematics (INdAM). Moreover, G.~G.~Giusteri was partially supported by a project funded by the European Union -- NextGenerationEU under the National Recovery and Resilience Plan (NRRP), Mission 4 Component 2 Investment 1.1 - Call PRIN 2022 No.\ 104 of February 2, 2022 of Italian Ministry of University and Research; Project 202249PF73 (subject area: PE - Physical Sciences and Engineering) ``Mathematical models for viscoelastic biological matter''. 


\bibliographystyle{abbrv}
\bibliography{references-cgs}
\end{document}